\theoremstyle{definition}
\newtheorem{theorem}{Theorem}[section]
\newtheorem{lemma}[theorem]{Lemma}
\newtheorem{proposition}[theorem]{Proposition}
\newtheorem{corollary}[theorem]{Corollary}
\theoremstyle{definition}
\newtheorem{definition}[theorem]{Definition}
\newtheorem{remark}[theorem]{Remark}
\newtheorem{examples}[theorem]{Examples}
\newtheorem{remarks}[theorem]{Remarks}
\definecolor{blue-url}{RGB}{0,0,100}
\definecolor{red-url}{RGB}{100,0,0}
\definecolor{green-url}{RGB}{0,100,0}
\definecolor{light-yellow}{RGB}{255,255,128}
\definecolor{light-blue}{RGB}{193,255,255}
\definecolor{light-red}{RGB}{239,83,80}
\renewcommand{\emptyset}{\varnothing}
\renewcommand{\setminus}{\smallsetminus}
\renewcommand{\,}{\kern 0.1em}
\providecommand\llb{\llbracket}
\providecommand\rrb{\rrbracket}
\providecommand\sqeq{\sqsubseteq}
\newcommand{\evid}[1]{\textsf{#1}}
\newcommand{\fin}{\mathrm{fin}}
\DeclareFontFamily{OMX}{MnSymbolE}{}
\DeclareSymbolFont{MnLargeSymbols}{OMX}{MnSymbolE}{m}{n}
\DeclareFontShape{OMX}{MnSymbolE}{m}{n}{
	<-6>  MnSymbolE5
	<6-7>  MnSymbolE6
	<7-8>  MnSymbolE7
	<8-9>  MnSymbolE8
	<9-10> MnSymbolE9
	<10-12> MnSymbolE10
	<12->   MnSymbolE12
}{}
\DeclareFontShape{OMX}{MnSymbolE}{b}{n}{
	<-6>  MnSymbolE-Bold5
	<6-7>  MnSymbolE-Bold6
	<7-8>  MnSymbolE-Bold7
	<8-9>  MnSymbolE-Bold8
	<9-10> MnSymbolE-Bold9
	<10-12> MnSymbolE-Bold10
	<12->   MnSymbolE-Bold12
}{}
\let\llangle\@undefined
\let\rrangle\@undefined
\DeclareMathDelimiter{\llangle}{\mathopen}%
{MnLargeSymbols}{'164}{MnLargeSymbols}{'164}
\DeclareMathDelimiter{\rrangle}{\mathclose}%
{MnLargeSymbols}{'171}{MnLargeSymbols}{'171}
\begin{document}
\title{On the arithmetic of power monoids}
\author{Laura Cossu}
\address{(L.~Cossu) Department of Mathematics and Computer Science, University of Cagliari | Palazzo delle Scienze, Via Ospedale 72, 09124 Cagliari, Italy}
\email{laura.cossu3@unica.it}
\urladdr{https://sites.google.com/view/laura-cossu}
\author{Salvatore Tringali}
\address{(S.~Tringali) School of Mathematical Sciences,
Hebei Normal University | Shijiazhuang, Hebei province, 050024 China}
\email{salvo.tringali@gmail.com}
\urladdr{http://imsc.uni-graz.at/tringali}

\subjclass[2020]{Primary: 20M13, 20M14. Secondary: 11B30, 11P70}
%
%
%
\keywords{Power monoids, irreducibles, minimal factorizations, unique factorization.}
\begin{abstract}
Given a monoid $H$ (written multiplicatively), the family 
$\mathcal{P}_{\mathrm{fin},1}(H)$ of all non-empty finite subsets of $H$ 
containing the identity element $1_H$ is itself a monoid, called the 
reduced finitary power monoid of $H$, under the operation of setwise 
multiplication induced by $H$.

We investigate the arithmetic of $\mathcal P_{\fin,1}(H)$  from the perspective of minimal factorizations into irreducibles, paying particular attention to the potential presence of non-trivial idempotents. 
Among other results, we provide necessary and sufficient conditions on $H$ for $\mathcal P_{\fin,1}(H)$ to admit unique minimal factorizations. 
Our results generalize and shed new light on recent developments on the topic.
\end{abstract}
\maketitle
\thispagestyle{empty}

\section{Introduction}
\label{sec:intro}
Roughly speaking, factorization theory is the study of a broad spectrum of phenomena arising from the possibility or impossibility of extending the fundamental theorem of arithmetic from the integers to more abstract settings where a set comes endowed with a (binary) operation.

The theory has traditionally focused on domains and cancellative monoids, primarily in the commutative setting \cite{GeHK06, Got-And-2022}. It is only in the past few years that researchers have made first significant steps towards a systematic study of factorizations in non-cancellative (and non-commutative) settings; see \cite[Section~2.4]{An-Tr18} for a review of some older ``scattered results'' along the same lines. The new trend has partly originated from a renewed interest in \evid{power monoids}~\cite{Fa-Tr18, An-Tr18}, a class of ``highly non-cancellative'' monoids intensively studied by semigroup theorists and computer scientists during the 1980s and 1990s, and featuring a dense interplay between algebra and combinatorics (see \cite[Section 1]{GaTr25}, \cite[Section 1]{Tri-2023(c)}, and references therein for a more extensive historical overview). 

Power monoids exhibit a rich arithmetic, making them an ideal testing ground for the on\-going development of a ``unifying theory of factorization'' \cite{Tr20(c), Co-Tr-21(a), Cos-Tri-2023(a), Cos-Tri-2023(b), Co25} that extends far beyond the boundaries of the classical theory. Moreover, they provide an algebraic framework for a variety of interesting problems in ad\-di\-tive combinatorics and closely related areas \cite{Tri-Yan2023(a), Bie-Ger-22, Casab-Danna-GarSan-2023, Tri-Yan2023(b), GaTr25}, notably including S\'{a}rk\"ozy's conjecture \cite[Conjecture 1.6]{Sark2012} on the ``additive irreducibility'' of the set of [non-zero] squares in a finite field of prime order. 
Further recent contributions to the topic are due to Gotti and his students \cite{Gonz-Li-Rabi-Rodr-Tira-2025, Agg-Got-Lu-2025, Dan-Got-Hong-Li-Sch-2025}, who have investigated the arithmetic of power monoids from the perspective of algebraic combinatorics; and to Reinhart \cite{Rein-2025}, who has taken the first significant step in years towards the proof of a challenging conjecture of Fan and Tringali \cite[Sect.~5]{Fa-Tr18}.

In more detail, let $H$ be a (multiplicatively written) monoid, often referred to in this context as the \textit{ground monoid}. Equipped with the operation of setwise multiplication
$$
(X, Y) \mapsto XY := \{xy \colon x \in X, \, y \in Y\},
$$
the family of all non-empty finite subsets of $H$ containing the identity $1_H$ forms a monoid in its own turn, herein denoted by $\mathcal P_{\fin,1}(H)$ and called the \evid{reduced} (\evid{finitary}) \evid{power monoid} of $H$. 
It is a natural problem to understand if, and how, a set $X \in \mathcal P_{\fin,1}(H)$ can be decomposed into a product of certain other sets regarded as the ``building blocks'' of the decomposition process.

In the classical theory, the building blocks are typically the atoms of the monoid under consideration, an atom being a non-unit that does not factor as a product of two non-units. However, atoms become less appealing in non-cancellative contexts, where the existence itself of atomic decompositions is not guaranteed even in monoids with ``smooth properties''.
Inspired by Anderson and Valdes-Leon's work \cite{And-ValLeo-1996} in the commutative setting, this has led Tringali \cite{Tr20(c)} to the idea of refining the notion of atom with the weaker notion of \textit{irreducible} (Section~\ref{subsec:irreds-atoms-quarks}). It is worth stressing from the outset that, while the terms ``atom'' and ``irreducible'' are used
interchangeably in the classical theory, they assume distinct meanings in our framework: the two concepts coincide, say, in a cancellative commutative monoid (see Remark \ref{rem:dedekind-finiteness-and-acyclicity}\ref{rem:dedekind-finiteness-and-acyclicity(1)}), but the latter is significantly more versatile than the former and better suited for a more general version of the theory (see, for instance, \cite[Theorem 3.10]{Tr20(c)}, \cite[Theorem 5.19]{Co-Tr-21(a)}, and \cite[Theorems 4.7 and 5.1]{Cos-Tri-2023(a)}). 

In fact, there is more to the story, as another problem is inherent to the
non-cancellative setting and also arises in the non-commutative (cancellative)
scenario: in many situations (e.g., in the presence of idempotents other than the identity), factorizations blow up in a predictable fashion, causing most of the invariants studied in the classical theory to lose their significance. 
This prompted Antoniou and Tringali \cite[Section~4]{An-Tr18}, followed by Cossu and Tringali \cite{Cos-Tri-2023(a)}, to introduce and develop the notion of \textit{minimal fac\-tor\-i\-za\-tion} (Section~\ref{subsec: (minimal) factorizations}) as a natural refinement of classical factorizations capable of countering the ``blow-up phenomenon''.

In the present paper, we add to this line of research. The goal is to study minimal factorizations into irreducibles in $\mathcal P_{\fin,1}(H)$.
In Section \ref{sec:irreds-factor}, after recalling basic ideas and definitions, we introduce the notion of (minimal) factorization in an arbitrary monoid $M$, describing its building blocks  and related properties. Next, we specialize these general concepts to the case of reduced power monoids. In particular, Proposition \ref{prop:2.3} characterizes the irreducibles of $\mathcal P_{\fin,1}(H)$ that are not atoms. In Propositions \ref{prop:2.7} and  \ref{prop:2.8}, we gather a series of finiteness results on (minimal) factorizations that hold for arbitrary reduced power monoids. In Section \ref{sect:3}, we investigate conditions on $H$ under which $\mathcal P_{\fin,1}(H)$ is UmF, i.e., it admits ``unique'' minimal factorizations. The characterization provided by the main Theorem \ref{thm:UmFness} implies that the UmF-ness of reduced power monoids can be reduced to the UmF-ness of reduced power monoids over a particular class of monoids we refer to as ``almost-breakable'' (Definition \ref{dfn: almost-brakable}). Section \ref{sec: UmFness and almost-breakable} deals with reduced power monoids of almost-breakable monoids. Most notably, we obtain a complete characterization of UmF-ness when $H$ contains at least one non-idempotent non-unit (Corollary \ref{cor: H cancellative}) or is commutative (Theorem \ref{thm: H commutative}). 
Our results generalize those in \cite{An-Tr18} (which only deal with factorizations into atoms) and make it possible to avoid imposing unnatural conditions to guarantee the existence of factorizations.

\section{Irreducibles and factorization}
\label{sec:irreds-factor}

Through this section, we introduce the notions of factorization we are interested in and specialize them to the class of monoids we aim to study.
We address the reader to Clifford and Preston's classical monograph \cite{Cli-Pre-1961} for generalities on semigroups and monoids. 

Notation and terminology, if not explained upon first use, are standard or should be clear from the context. In particular, we denote by $\mathbb N$ the non-negative integers and, given $a, b \in \mathbb N$, we refer to $\llb a, b \rrb := \{x \in \mathbb N \colon a \le x \le b\}$ as the \evid{discrete interval} from $a$ to $b$. Also, as a rule of thumb, we use the letter $M$ for an arbitrary monoid, and $H$ for the ground monoid of a reduced power monoid.

\subsection{Irreducibles, atoms, and quarks}\label{subsec:irreds-atoms-quarks}

Let $M$ be a (multiplicatively written) monoid; we use $1_M$ for the \evid{identity} of $M$. We call an element $x \in M$ a \evid{unit} if $xy = yx = 1_M$ for some $y \in M$; otherwise, $x$ is a \evid{non-unit}. We denote by $M^\times$ the set of units of $M$. It is a basic fact that $M^\times$ is a subgroup of $M$. We call $M$ \evid{reduced} if $M^\times = \{1_M\}$, and \evid{Dedekind-finite} if $xy=1_M$ for some $x, y \in M$ implies $yx = 1_M$.

We denote by $\mid_M$ the binary relation on $M$ defined by $x \mid_M y$ (read as “$x$ divides $y$ in $M$” or “$x$ is a divisor of $y$ in $M$”) if and only if $y \in MxM := {uxv : u, v \in M}$. It is straightforward that $\mid_M$ is reflexive and transitive, hence a (partial) preorder. Accordingly, we refer to $\mid_M$ as the \evid{divisibility preorder} on $M$. Two elements $x, y \in M$ are \evid{associated} (in $M$), written $x \simeq_M y$, if they divide each other. Moreover, $x$ is a \evid{proper divisor} of $y$ (or, equivalently, $x$ properly divides $y$) if $x$ divides but is not associated to $y$ (i.e., $y \in MxM$ and $x \notin MyM$), and is a \evid{unit-divisor} (resp., a \evid{non-unit-divisor}) if $x$ divides (resp., does not divide) the identity $1_M$. Later, we will simply write $\mid$ instead of $\mid_M$ and $\simeq$ instead of $\simeq_M$ whenever the ``ambient monoid'' $M$ is clear from the context.

Following \cite[Section~3]{Co-Tr-21(a)}, we let an \evid{irreducible} (of $M$) be a non-unit-divisor $a \in M$ such that $a \ne xy$ for all non-unit-divisors $x, y \in M$ that properly divide $a$. An \evid{atom} is, on the other hand, a non-unit-divisor that does not factor as a product of two non-unit-divisors, while a \evid{quark} is a non-unit-divisor which is not properly divided by any non-unit-divisor. 

\begin{remarks}
\label{rem:dedekind-finiteness-and-acyclicity}
\begin{enumerate*}[label=\textup{(\arabic{*})},mode=unboxed]
\item\label{rem:dedekind-finiteness-and-acyclicity(1)} 
In a monoid $M$, every atom is an irreducible, and so is every quark. The converse, however, is false in general (see \cite[Example 4.8]{Tr20(c)} and Proposition \ref{prop:2.3} below); but it holds, for instance, when $M$ is \evid{acyclic} \cite[Definition 4.2 and Corollary 4.4]{Tr20(c)}, meaning that $x \ne \allowbreak uxv$ for all $u, v, x \in M$ such that $u$ or $v$ is a non-unit (e.g., a cancellative commutative monoid is acyclic). Note, in addition, that if $M$ is Dedekind-finite, then the unit-divisors of $M$ coincide with its units. In this case, an atom of $M$ is precisely a non-unit that cannot be expressed as a product of two non-units, in line with the definition given in the introduction. Moreover, every acyclic monoid is Dedekind-finite \cite[Corollary 4.4]{Tr20(c)}.
\end{enumerate*}

\vskip 0.05cm

\begin{enumerate*}[label=\textup{(\arabic{*})},mode=unboxed, resume]
\item\label{rem:dedekind-finiteness-and-acyclicity(2)} The notion of \textit{irreducible} used in the present work is actually a special case of a much more abstract concept (see \cite[Definition 3.6]{Tr20(c)} and \cite[Definition 3.1]{Co-Tr-21(a)}), whose definition is based on the idea of equipping a monoid $M$ with a preorder $\preceq$ (in our case, $\preceq$ is the divisibility preorder $\mid_M$). From this perspective, the identity $1_M$ plays a particularly distinguished role, as one first defines a \evid{$\preceq$-unit} as 
an element $u \in M$ such that $1_M \preceq u \preceq 1_M$, and a $\preceq$-non-unit as an element $u \in M$ that is not a $\preceq$-unit. Accordingly, a \evid{$\preceq$-irreducible} 
is a $\preceq$-non-unit $a \in M$ such that $a \ne xy$ for all $\preceq$-non-units 
$x, y \in M$ with $x \prec a$ and $y \prec a$, where $u \prec v$ means $u \preceq v \not\preceq u$. Likewise, a \evid{$\preceq$-atom} is a $\preceq$-non-unit $a \in M$ 
such that $a \ne xy$ for all $\preceq$-non-units $x, y \in M$; and a \evid{$\preceq$-quark} is a $\preceq$-non-unit $a \in M$ such that if $b \prec a$ then $b$ is a $\preceq$-unit. 
\end{enumerate*}
\end{remarks}

We aim to characterize the irreducibles, atoms, and quarks of reduced power monoids. To this end, our first lemma collects some basic properties that will come in handy later.

\begin{lemma}\label{lem: atoms-irreds}
Let $H$ be a monoid and let $X,Y,Z\in \mathcal{P}_{\mathrm{fin},1}(H)$. The following hold:

\begin{enumerate}[label=\textup{(\roman{*})}]

\item\label{lem: atoms-irreds(i)} If $X$ is a divisor of $Y$ in $\mathcal{P}_{\mathrm{fin},1}(H)$, then $X \subseteq Y$.

\item\label{lem: atoms-irreds(ii)} $\mathcal P_{\fin,1}(H)$ is a reduced, Dedekind-finite monoid.

\item\label{lem: atoms-irreds(iii)} $X$ and $Y$ are associated in $\mathcal{P}_{\mathrm{fin},1}(H)$ if and only if $X = Y$.

\item\label{lem: atoms-irreds(iv)} $X$ is irreducible in $ \mathcal{P}_{\mathrm{fin},1}(H)$ if and only if $X \ne \{1_H\}$ and $X\ne YZ$ for all $Y, Z \subsetneq X$.

\item\label{lem: atoms-irreds(v)} $X$ is irreducible in $ \mathcal{P}_{\mathrm{fin},1}(H)$ if and only if it is a quark.

\item\label{lem: atoms-irreds(vi)} If $X$ is irreducible in $\mathcal{P}_{\mathrm{fin},1}(H)$ but not an atom, then $X^2=X$.
\end{enumerate}
\end{lemma}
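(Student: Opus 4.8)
The plan is to prove the six items in the order stated, since each relies on the ones before it, with essentially all the work concentrated in (v). Items (i), (ii), (iii) are unwindings of the definitions. For (i): if $X$ divides $Y$, say $Y = UXV$ with $U,V \in \mathcal P_{\fin,1}(H)$, then $X = 1_H \cdot X \cdot 1_H \subseteq UXV = Y$ because $1_H \in U$ and $1_H \in V$. Then (iii) is immediate, as $X \simeq Y$ means $X \subseteq Y$ and $Y \subseteq X$ by (i). For (ii): if $XY = \{1_H\}$, then $X$ divides $\{1_H\}$ (via the factorization $\{1_H\} \cdot X \cdot Y$), so $X \subseteq \{1_H\}$ by (i) and hence $X = \{1_H\}$ since $1_H \in X$; consequently $Y = XY = \{1_H\}$ as well. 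This gives reducedness and Dedekind-finiteness in one stroke, and I would stress that the direct argument is genuinely needed, as a reduced monoid need not be Dedekind-finite in general.

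For (iv) I would translate the abstract notion of irreducible into the language of subsets. By (ii) together with Remark \ref{rem:dedekind-finiteness-and-acyclicity}\ref{rem:dedekind-finiteness-and-acyclicity(1)} (unit-divisors of a Dedekind-finite monoid coincide with its units), the non-unit-divisors of $\mathcal P_{\fin,1}(H)$ are exactly the sets $\ne \{1_H\}$; and by (i) and (iii), ``$Z$ properly divides $X$'' is equivalent to ``$Z$ divides $X$ and $Z \subsetneq X$''. With this dictionary, one implication of (iv) is clear; for the other, if $X = YZ$ with $Y, Z \subsetneq X$, then $Y, Z \ne \{1_H\}$ (otherwise one factor would coincide with the other and force $X \subsetneq X$), while each of $Y$ and $Z$ divides $X$ (supply $\{1_H\}$ as the missing outer factor) and is distinct from $X$, hence properly divides $X$, so $X$ is not irreducible.

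The heart of the lemma, and the step I expect to require the one non-formal idea, is the forward direction of (v): every irreducible of $\mathcal P_{\fin,1}(H)$ is a quark (the converse being the general fact recorded in Remark \ref{rem:dedekind-finiteness-and-acyclicity}\ref{rem:dedekind-finiteness-and-acyclicity(1)}). Assume, toward a contradiction, that $X$ is irreducible but not a quark, so there is $Y$ with $\{1_H\} \ne Y \subsetneq X$ (the inclusion by (i)) and $X = UYV$ for some $U, V \in \mathcal P_{\fin,1}(H)$. Reading this as $X = (UY)V$ and noting that $UY$ and $V$ are divisors of $X$, hence subsets of $X$ by (i), irreducibility and (iv) force $UY = X$ or $V = X$. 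If $V = X$, then $X = UYX$, and the squeeze $X \subseteq YX \subseteq UYX = X$ (first inclusion since $1_H \in Y$, second since $1_H \in U$) gives $YX = X$. If $UY = X$, then $X = UY$ with $Y \subsetneq X$ and $U \subseteq X$ by (i), so $U = X$ (else $X = UY$ would contradict (iv)), i.e. $XY = X$. In either case the set $G := Y$ satisfies $\{1_H\} \ne G \subsetneq X$ and $XG = X$ or $GX = X$. Now the key move: choose $h \in G \setminus \{1_H\}$ and put $X' := X \setminus \{h\}$; then $X' \in \mathcal P_{\fin,1}(H)$ (since $h \ne 1_H$) and $X' \subsetneq X$. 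Assuming $XG = X$ (the case $GX = X$ being symmetric), we have $X'G \subseteq XG = X$, while $X'G \supseteq X'$ (since $1_H \in G$) and $h = 1_H \cdot h \in X'G$ (since $1_H \in X'$ and $h \in G$), so $X'G \supseteq X' \cup \{h\} = X$; hence $X = X'G$ with $X', G \subsetneq X$, contradicting (iv). This proves (v).

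Finally, (vi) follows quickly: if $X$ is irreducible but not an atom, write $X = BC$ with $B$ and $C$ non-unit-divisors, so $B, C \ne \{1_H\}$ and $B, C \subseteq X$ by (i); by (iv) one of them, say $B$, equals $X$, whence $X = XC$. Then $C$ divides $X$ and $C \ne \{1_H\}$, but $X$ is a quark by (v), so $C$ cannot properly divide $X$; therefore $C = X$ by (iii), and $X^2 = X$. In summary, (i)--(iv) are definitional bookkeeping, (vi) is a short deduction from (iv) and (v), and the only genuinely non-routine point is the $X \setminus \{h\}$ argument in (v).
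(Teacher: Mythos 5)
Your proof is correct, and items (i), (ii), (iii), (iv), and (vi) follow essentially the same lines as the paper's (the paper dispatches (iv) as "straightforward from (i)--(iii)" and proves (vi) by the same quark argument you use). The one genuine divergence is item (v): the paper does not prove it at all but cites an external result (Proposition 4.11(iii) of the reference [Tr20(c)]), whereas you give a self-contained argument. Your route reduces "irreducible but not a quark" to the existence of a proper subset $G$ with $\{1_H\} \ne G \subsetneq X$ and $XG = X$ (or $GX = X$), and then produces the forbidden proper factorization $X = (X \setminus \{h\})\,G$ for any $h \in G \setminus \{1_H\}$, using that $1_H \in X \setminus \{h\}$ restores the deleted element $h$. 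This is a clean, purely set-theoretic argument that makes the lemma independent of the cited proposition; what the paper's citation buys instead is that the quark/irreducible coincidence is seen as an instance of a general monoid-theoretic fact rather than a special feature of power monoids. Both are valid; yours is the more elementary and self-contained of the two, and the $X \setminus \{h\}$ device is exactly the right non-formal idea for this setting.
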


\begin{proof}
As for \ref{lem: atoms-irreds(i)}, it is enough to note that, if $X$ is a divisor of $Y$ in $\mathcal P_{\fin,1}(H)$, then $Y = UXV$ for some $U, V \in \mathcal{P}_{\mathrm{fin},1}(H)$ and hence $X = \{1_H\} X \{1_H\} \subseteq UXV = Y$. \ref{lem: atoms-irreds(ii)} is now immediate, because $XY = \{1_H\}$ implies by \ref{lem: atoms-irreds(i)} that $X \cup Y \subseteq \{1_H\}$ and hence $X = Y = \{1_H\}$.
On the other hand, \ref{lem: atoms-irreds(iv)} is straightforward from \ref{lem: atoms-irreds(i)}--\ref{lem: atoms-irreds(iii)}, and \ref{lem: atoms-irreds(v)} follows from \cite[Proposition 4.11(iii)]{Tr20(c)}. So, we focus on \ref{lem: atoms-irreds(iii)} and \ref{lem: atoms-irreds(vi)}.

\vskip 0.05cm

\ref{lem: atoms-irreds(iii)} We prove the equivalent statement: $X$ is a proper divisor of $Y$ if and only if $X\mid Y$ and $X \subsetneq Y$. Assume that $X$ is a proper divisor of $Y$ in $\mathcal{P}_{\mathrm{fin},1}(H)$. If $X=Y$, then $Y=\{1_H\}X$, and consequently $Y\mid X$, a contradiction. Thus, $X\ne Y$. Now assume that $X\mid Y$ and $X\ne Y$. By item \ref{lem: atoms-irreds(i)}, $X\subseteq Y$ and $Y \nmid X$, otherwise $Y\subseteq X$, yielding $X=Y$.

\vskip 0.05cm

\ref{lem: atoms-irreds(vi)} Assume that $X$ is irreducible, but it is not an atom. Then, by item \ref{lem: atoms-irreds(ii)}, $X=YZ$ for some non-identity elements $Y,Z\in \mathcal{P}_{\mathrm{fin},1}(H)$. This implies that $Y\mid X$ and $Z\mid X$, but we know from \ref{lem: atoms-irreds(v)} that $X$ is a quark, and this forces $Y=Z=X$.
\end{proof}

In supplement to items \ref{lem: atoms-irreds(v)} and \ref{lem: atoms-irreds(vi)} of Lemma \ref{lem: atoms-irreds}, we continue with a characterization of the irreducibles of a reduced power monoid that are not atoms.

\begin{proposition}
\label{prop:2.3}
Let $H$ be a monoid. A set $X \in \mathcal{P}_{\mathrm{fin},1}(H)$ is irreducible but not an atom if and only if $X = \{1_H,x\}$ for some $x \in H$ such that $x^2 = 1_H$ or $x^2 = x$.
\end{proposition}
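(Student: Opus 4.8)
The plan is to establish the two implications separately, in both cases reducing to the criterion for irreducibility in Lemma~\ref{lem: atoms-irreds}\ref{lem: atoms-irreds(iv)}. For the ``if'' direction, assume $X = \{1_H, x\}$ with $x^2 = 1_H$ or $x^2 = x$ (we may take $x \ne 1_H$, since $\{1_H\}$ is not irreducible). Then $X^2 = \{1_H, x, x^2\} = \{1_H, x\} = X$, so $X = X \cdot X$ exhibits $X$ as a product of two non-identity elements of $\mathcal P_{\fin,1}(H)$; as $\mathcal P_{\fin,1}(H)$ is reduced and Dedekind-finite by Lemma~\ref{lem: atoms-irreds}\ref{lem: atoms-irreds(ii)}, its non-unit-divisors are exactly its non-identity elements (Remark~\ref{rem:dedekind-finiteness-and-acyclicity}\ref{rem:dedekind-finiteness-and-acyclicity(1)}), so $X$ is not an atom. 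To see that $X$ is nonetheless irreducible, note that the only element of $\mathcal P_{\fin,1}(H)$ that is a proper subset of the two-element set $X$ is $\{1_H\}$, and $\{1_H\}\cdot\{1_H\} = \{1_H\} \ne X$; Lemma~\ref{lem: atoms-irreds}\ref{lem: atoms-irreds(iv)} then applies.

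For the ``only if'' direction, suppose $X$ is irreducible but not an atom. By Lemma~\ref{lem: atoms-irreds}\ref{lem: atoms-irreds(vi)} we have $X^2 = X$, and since $1_H \in X$ this makes $X$ a finite submonoid of $H$. The key step is to show $|X| \le 2$, which I would prove by contradiction: if $|X| \ge 3$, pick any $x \in X \setminus \{1_H\}$ and set $Y := \{1_H, x\}$ and $Z := X \setminus \{x\}$. Both $Y$ and $Z$ lie in $\mathcal P_{\fin,1}(H)$ --- in particular $1_H \in Z$ because $x \ne 1_H$ --- and both are proper subsets of $X$: the set $Z$ because it omits $x$, and $Y$ because $|Y| = 2 < |X|$. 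Now $YZ \subseteq X^2 = X$, while $x = x \cdot 1_H \in xZ \subseteq YZ$ and $Z \subseteq YZ$, so $X = Z \cup \{x\} \subseteq YZ$ and therefore $X = YZ$ with $Y, Z \subsetneq X$ --- contradicting Lemma~\ref{lem: atoms-irreds}\ref{lem: atoms-irreds(iv)}. Hence $|X| \le 2$; since an irreducible is a non-unit-divisor we have $X \ne \{1_H\}$, so $X = \{1_H, x\}$ for some $x \in H$ with $x \ne 1_H$, and $X^2 = X$ forces $x^2 \in \{1_H, x\}$, that is, $x^2 = 1_H$ or $x^2 = x$.

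I expect the only step requiring a genuine idea to be the decomposition $X = \{1_H, x\}\cdot(X \setminus \{x\})$ used when $|X| \ge 3$: one must spot that the removed element $x$ is recovered as $x \cdot 1_H$ from the second factor (which still contains $1_H$), and that $\{1_H, x\}$ is a \emph{proper} subset of $X$ --- this last point, together with the fact that $X \setminus \{x\}$ is not all of $X$, is exactly what the hypothesis $|X| \ge 3$ buys us, and a hasty argument might overlook it. Everything else is a direct appeal to Lemma~\ref{lem: atoms-irreds}.
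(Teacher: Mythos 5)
Your proof is correct. The ``if'' direction is essentially the paper's: both observe that $X=\{1_H,x\}$ is idempotent, hence not an atom, and that a $2$-element set is trivially irreducible by Lemma~\ref{lem: atoms-irreds}\ref{lem: atoms-irreds(iv)}. In the ``only if'' direction you and the paper both start from $X^2=X$ (Lemma~\ref{lem: atoms-irreds}\ref{lem: atoms-irreds(vi)}), but you then diverge: the paper notes that $X\{1_H,x_1\}=X$, so $\{1_H,x_1\}$ divides $X$, and invokes the quark characterization of irreducibles (Lemma~\ref{lem: atoms-irreds}\ref{lem: atoms-irreds(v)}, which rests on an external citation) to force $\{1_H,x_1\}=X$; you instead exhibit, when $|X|\ge 3$, the explicit factorization $X=\{1_H,x\}\cdot(X\setminus\{x\})$ into two \emph{proper} subsets of $X$, which contradicts the elementary criterion of Lemma~\ref{lem: atoms-irreds}\ref{lem: atoms-irreds(iv)} directly. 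Your verification that this really is a factorization is sound: $YZ\subseteq X^2=X$ because both factors sit inside $X$, and $X\subseteq YZ$ because $Z\subseteq 1_H\cdot Z$ and $x=x\cdot 1_H$. The trade-off is minor but real: your route is more self-contained (it bypasses the quark machinery entirely and only needs the subset criterion), while the paper's is a one-line divisibility observation once the quark property is in hand. You do rightly flag the one point where care is needed, namely that $|X|\ge 3$ is exactly what makes both factors proper.
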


\begin{proof}
To begin, assume $X$ is irreducible but not an atom. By items \ref{lem: atoms-irreds(ii)} and \ref{lem: atoms-irreds(vi)} of Lemma~\ref{lem: atoms-irreds}, this yields $X^2 = X \ne \{1_H\}$. Accordingly, $X=\{1_H,x_1,\dots,x_n\}$ for some $x_1, \allowbreak \ldots, \allowbreak x_n \in \allowbreak H \setminus \{1_H\}$. Then $X \subseteq \allowbreak X \{1_H,x_1\} \subseteq X^2 = X$ and hence $\{1_H, x_1\} \mid X \{1_H, x_1\} = X$. Since $X$ is a quark by Lemma~\ref{lem: atoms-irreds}\ref{lem: atoms-irreds(v)}, it follows that $X = \{1_H, x_1\}$ and $\{1_H, x_1\}^2 = \{1_H, x_1\}$, which is only possible if $x_1^2 = x_1$ or $x_1 = 1_H$.

Conversely, suppose that $X=\{1_H,x\}$ for some $x \in H$ such that $x^2=x$ or $x^2=1_H$. It is then clear that $X$ is an idempotent. Thus we are done, because every $2$-element set in $\mathcal{P}_{\mathrm{fin},1}(H)$ is irreducible (e.g., by item \ref{lem: atoms-irreds(iv)} of Lemma \ref{lem: atoms-irreds}) and it is obvious that, in any monoid, an atom is not an idempotent.
\end{proof}

As one might expect, a comprehensive  characterization of the irreducibles of $ \mathcal{P}_{\mathrm{fin},1}(H)$ is going to critically depend on specific properties of the monoids $H$. Nevertheless, there are ``constructive results'' that hold in full generality, as with the following.

\begin{proposition}\label{prop:antichains-and-irreds}
Let $H$ be a monoid and $A \subseteq H \setminus \{1_H\}$ be a non-empty finite \evid{$\mid_H$-antichain}, meaning that $a \nmid_H b$ for all $a, b \in A$ with $a \ne b$. The set $\{1_H\} \cup \allowbreak A$ is then irreducible in $\mathcal P_{\mathrm{fin},1}(H)$.
\end{proposition}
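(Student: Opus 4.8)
The plan is to run the argument through the criterion of Lemma~\ref{lem: atoms-irreds}\ref{lem: atoms-irreds(iv)}. Write $X := \{1_H\} \cup A$. Then $X \neq \{1_H\}$ because $A$ is non-empty, so by that item it suffices to show that $X$ cannot be written as $YZ$ with $Y, Z \subsetneq X$. I would argue by contradiction: suppose $X = YZ$ with $Y, Z$ proper subsets of $X$. Since every member of $\mathcal P_{\fin,1}(H)$ contains $1_H$, we may write $Y = \{1_H\} \cup B$ and $Z = \{1_H\} \cup C$ with $B, C \subsetneq A$; moreover $B \neq \emptyset \neq C$, for otherwise $YZ$ would equal the other factor, which is a proper subset of $X$. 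Expanding, $YZ = \{1_H\} \cup B \cup C \cup BC$.

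The first real step is to show that $A = B \cup C$. Indeed, suppose some $a \in A$ lies outside $B \cup C$. As $a \in X = YZ$ and $a \neq 1_H$, we get $a = bc$ for some $b \in B$ and $c \in C$; but then $a = 1_H \cdot b \cdot c \in HbH$, so $b \mid_H a$, and $b \neq a$ since $a \notin B$, contradicting the antichain hypothesis. Hence $A = B \cup C$; combined with $B, C \subsetneq A$ this yields $B \setminus C = A \setminus C \neq \emptyset$ and $C \setminus B = A \setminus B \neq \emptyset$. I then fix $b \in B \setminus C$ and $c \in C \setminus B$, so that $b \neq c$ and $bc \in BC \subseteq YZ = \{1_H\} \cup A$.

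To finish I would rule out the two possibilities for $bc$. If $bc \in A$: from $bc = 1_H \cdot b \cdot c \in HbH$ and $bc = b \cdot c \cdot 1_H \in HcH$ we get $b \mid_H bc$ and $c \mid_H bc$, and since $b, c, bc$ all lie in the antichain $A$ this forces $bc = b$ and $bc = c$, whence $b = c$, a contradiction. The remaining case $bc = 1_H$ is the one delicate point, since it can genuinely occur when $H$ has one-sided units; but here I would use the identity $c = c(bc) = cbc$, which exhibits $c = c \cdot b \cdot c \in HbH$, i.e.\ $b \mid_H c$, once again contradicting $b \neq c$ in the antichain. This exhausts all cases, so no decomposition $X = YZ$ into proper factors exists, and $X$ is irreducible.

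The only genuine obstacle I anticipate is precisely the collapse $bc = 1_H$: the short computation $c = cbc$ is what guarantees that even such a degenerate product still yields a forbidden divisibility relation \emph{inside} $A$, so that the antichain hypothesis disposes of it on exactly the same footing as the generic case; everything else is a routine unfolding of setwise multiplication together with Lemma~\ref{lem: atoms-irreds}.
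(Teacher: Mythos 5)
Your proof is correct and follows essentially the same strategy as the paper's: assume a proper factorization $\{1_H\}\cup A = YZ$, show that every element of $A$ must already lie in one of the two factors, pick an element missing from each factor but present in the other, and derive a contradiction from the fact that their product lands in $\{1_H\}\cup A$. The one point of divergence is the degenerate case $bc = 1_H$: the paper rules it out in advance by observing that a unit-divisor would divide every other element of $A$ (so, since $|A|\ge 2$, the antichain hypothesis forces every element of $A$ to be a non-unit-divisor), whereas you dispose of it on the spot via the identity $c = c(bc) = cbc \in HbH$ — both are valid, and yours is arguably the more self-contained of the two.
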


\begin{proof}
The set $A' := \{1_H\} \cup A$ is a non-unit of $\mathcal P_{\mathrm{fin},1}(H)$, because $A$ is a non-empty finite subset of $H\setminus \{1_H\}$ and $\mathcal P_{\mathrm{fin},1}(H)$ is reduced by Lemma \ref{lem: atoms-irreds}\ref{lem: atoms-irreds(ii)}. Suppose for a contradiction that $A'$ is not irreducible. There then exist \textit{proper} subsets $X$ and $Y$ of $A'$, both containing the identity $1_H$, such that $A' = \allowbreak XY$, and this can only happen if $|A| \ge 2$. Consequently, we can find $a, b \in A$ with $a \notin X$ and $b \notin Y$; moreover, each element in $A$ is a non-unit-divisor, or else $A$ would not be a $\mid_H$-antichain (which is absurd) since a unit-divisor divides any other element in $H$ (and $A$ has at least two elements). It follows that $b \in X$ and hence $a \ne b$ (if $b \notin X$, then $b \in XY$ and $b \notin Y$ would yield $b = cd$ for certain $c, d \in A \setminus \{1_H, b\}$, contradicting that $A$ is a $\mid_H$-antichain). In a similar way, $a \in Y$. Putting the pieces together, we can thus conclude that $1_H \ne ba \in A$ and hence $ba = c$ for some $c \in A$, which is impossible as it implies that either $a \mid_H c$ and $a \ne c$, or $b \mid_H c$ and $b \ne c$ (again contradicting that $A$ is a $\mid_H$-antichain).
\end{proof}

\subsection{Factorizations and minimal factorizations}
\label{subsec: (minimal) factorizations}

We write $\mathscr F(X)$ for the free monoid over a set $X$. We refer to the elements of $\mathscr F(X)$ as \evid{$X$-words} and to its identity  as the \evid{empty word}. 

Given a monoid $M$ and an element $x \in M$, we denote by $\mathscr I(M)$ the set of irreducibles of $M$ and define a \evid{factorization} (\evid{into irreducibles}) of $x$ as an $\mathscr I(M)$-word $\mathfrak a$ such that $\pi_M(\mathfrak a) = x$. Here, $\pi_M$ is the \evid{factorization homomorphism} of $M$, namely, the unique extension of the identity map on $M$ to a monoid homomorphism $\mathscr F(M) \to M$. Following \cite[Section~3]{Cos-Tri-2023(a)}, we define $\sqeq_M$ as the binary relation on $\mathscr F(M)$ whose graph consists of all pairs $(\mathfrak a, \mathfrak b)$ of $M$-words such that $\mathfrak a$ is, up to associatedness of letters, a subword of some permutation of $\mathfrak b$ (it is readily checked that $\sqeq_M$ is a preorder). Two $M$-words $\mathfrak a$ and $\mathfrak b$ are \evid{equivalent} if $\mathfrak a \sqeq_M \mathfrak b \sqeq_M \mathfrak a$; they are \evid{inequivalent} otherwise. Also, a factorization (into irreducibles) of $x$ is a \evid{minimal factorization} if there is no factorization $\mathfrak b$ of $x$ with $\mathfrak b \sqeq_M \mathfrak a \not\sqeq_M \mathfrak b$.  

We say that $M$ is \evid{factorable} if every non-unit-divisor of $M$ factors as a product of irreducibles; \evid{BF} (or a \evid{bounded factorization monoid}) if $M$ is factorable and the set of all factorization lengths of any fixed element is bounded (the length of a factorization $\mathfrak a$ being the length of $\mathfrak a$ as a word in the free monoid over $M$); \evid{HF} (or a \evid{half-factorial monoid}) if $M$ is factorable and the factorizations of any element have all the same length; \evid{FF} (or a \evid{finite factorization monoid}) if it is factorable and each element has finitely many inequivalent factorizations; and \evid{UF} (or a \evid{unique factorization monoid}) if $M$ is factorable and any two irreducible factorizations of an  element are equivalent.

Replacing factorizations with minimal factorizations in the definition of BF, HF, FF, and UF monoid results in the notions of \evid{BmF}, \evid{HmF}, \evid{FmF}, and \evid{UmF monoid}, resp. Note that, if $M$ is a factorable monoid, then every non-unit-divisor $x \in M$ has at least one minimal factorization, i.e., there exists a factorization $\mathfrak a$ of $x$ with the property that $\mathfrak b \not\sqeq_M \mathfrak a$ for any $\mathscr I(M)$-word $\mathfrak b \in \pi_M^{-1}(x)$.

If we shift our perspective to regard atoms (rather than irreducibles) as the ``building blocks'' of the decompositions of interest, we can modify the above definitions accordingly. This results in the notions of \evid{atomic factorization}, \evid{atomic} monoid, \evid{BF-atomic} monoid, and so on.

\begin{remark}\label{rem:minimal-factors}

In an arbitrary monoid $M$, the only factorization (into irreducibles) of the identity $1_M$ is the empty word. In fact, if $1_M = x_1 \cdots x_n$ for some $x_1, \ldots, x_n \in M$ (with $n \in \mathbb N^+$), then each of the $x_i$'s is a unit-divisor and hence cannot be an irreducible of $M$.

Suppose, on the other hand, that $M$ has the property that every irreducible is a quark. If $a \in \mathscr I(M)$ and $a = x_1 \cdots x_n$ for some non-unit-divisors $x_1, \ldots, x_n \in M$, then $x_i \mid_M a$ for each $i \in \llb 1, n \rrb$. Since $a$ is a quark, it follows that $a$ and each of $x_1, \ldots, x_n$ are associated. This implies $a \sqeq_M x_1 \ast \cdots \ast x_n$, and the inequality is strict unless $n = 1$. So, the only factorization of $a$ (into irreducibles) is the $\mathscr I(M)$-word $a$.

\end{remark}

In the remainder of the section, we will focus our attention on (minimal) factorizations in reduced power monoids. Before proceeding, another remark is in order.

\begin{remark}
\label{rem:minimal-factors-power monoids} 
Let $H$ be an arbitrary monoid.
By Lemma \ref{lem: atoms-irreds}\ref{lem: atoms-irreds(iii)}, two $\mathcal P_{\fin,1}(H)$-words are equivalent if and only if they are a permutation of each other. Thus, a non-empty minimal factorization of a set $X \in \allowbreak \mathcal P_{\fin,1}(H)$ is nothing else than a non-empty word  $A_1 \ast \cdots \ast A_n$ in the free monoid over $\mathscr I(\mathcal P_{\fin,1}(H))$ such that $X \ne \allowbreak A_{\sigma(1)} \cdots \allowbreak A_{\sigma(k)}$ for all $k \in \llb 1, n-1 \rrb$ and every permutation $\sigma$ of $\llb 1, k \rrb$. In addition, it is not difficult to see that a minimal factorization of $X$ must have length $\le |X| - 1$. The proof mirrors the one of \cite[Proposition 4.12]{An-Tr18}, after noticing that, by Remark \ref{rem:minimal-factors} and  Lemma \ref{lem: atoms-irreds}\ref{lem: atoms-irreds(v)}, the only minimal factorization of an irreducible $A \in \mathcal{P}_{\fin,1}(H)$ is the $\mathcal{P}_{\fin,1}(H)$-word $A$ (we omit further details).
\end{remark}

As mentioned in Section~\ref{sec:intro}, minimal factorizations were first introduced by Antoniou and Tringali \cite[Section~4.1]{An-Tr18}, who, however, used atoms (rather than irreducibles) as their building blocks. In particular, this led them to prove in \cite[Theorem 3.9]{An-Tr18} that the atomicity of the reduced power monoid $\mathcal{P}_{\fin,1}(H)$ of a monoid $H$ is equivalent to the condition that $1_H \ne x^2 \ne x$ for all non-identity elements $x \in H$. The next results are complementary to Antoniou and Tringali's theorem and help complete the picture.

\begin{proposition}
\label{prop:2.7}
The following are equivalent for a monoid $H$:

\begin{enumerate}[label=\textup{(\alph{*})}, mode=unboxed]
\item\label{prop:2.7(a)}
$H$ is aperiodic (i.e., every non-identity element of $H$ generates an infinite submonoid).
\item\label{prop:2.7(b)} $ \mathcal{P}_{\fin,1}(H)$ is BF-atomic.
\item\label{prop:2.7(c)} $ \mathcal{P}_{\fin,1}(H)$ is BF.

\item\label{prop:2.7(d)}
$ \mathcal{P}_{\fin,1}(H)$ is FF.
\item\label{prop:2.7(e)} $ \mathcal{P}_{\fin,1}(H)$ is FF-atomic.
\end{enumerate}
\end{proposition}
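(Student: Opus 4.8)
The plan is to prove a cycle of implications, using \ref{prop:2.7(a)} as the hub. First I would note that \ref{prop:2.7(c)} $\Rightarrow$ \ref{prop:2.7(b)} and \ref{prop:2.7(d)} $\Rightarrow$ \ref{prop:2.7(e)} are immediate, since every atomic factorization is a factorization into irreducibles and the BF (resp.\ FF) condition restricted to atomic factorizations is exactly the BF-atomic (resp.\ FF-atomic) condition; also \ref{prop:2.7(d)} $\Rightarrow$ \ref{prop:2.7(c)} is standard (finitely many inequivalent factorizations forces bounded lengths, since factorization lengths are a subset of a finite multiset of length-values—more carefully, if lengths were unbounded there would be infinitely many inequivalent factorizations). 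So the real content is to show \ref{prop:2.7(a)} $\Rightarrow$ \ref{prop:2.7(d)} and, say, \ref{prop:2.7(b)} $\Rightarrow$ \ref{prop:2.7(a)} (together with \ref{prop:2.7(e)} $\Rightarrow$ \ref{prop:2.7(a)}), which closes the loop.

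For \ref{prop:2.7(a)} $\Rightarrow$ \ref{prop:2.7(d)}: assume $H$ is aperiodic. The key observation is that, by Lemma~\ref{lem: atoms-irreds}\ref{lem: atoms-irreds(i)}, every irreducible divisor of a set $X \in \mathcal{P}_{\fin,1}(H)$ appearing in a factorization of $X$ is a subset of $X$ (since in any factorization $X = A_1 \cdots A_n$ each $A_i$ divides $X$), and there are only finitely many subsets of the finite set $X$ containing $1_H$. Hence the alphabet of letters available in any factorization of $X$ is finite; by Remark~\ref{rem:minimal-factors-power monoids}-style reasoning I would next bound the length of an arbitrary factorization of $X$. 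Here aperiodicity enters: if $H$ is aperiodic then $1_H \ne x^2$ and $1_H \ne x \ne x^2$ is not quite what we get—rather, aperiodicity says no element has $x^n = 1_H$ or $x^m = x^n$ with $m \ne n$, so in particular no non-identity $x \in H$ satisfies $x^2 = 1_H$ or $x^2 = x$; by Proposition~\ref{prop:2.3} this means every irreducible of $\mathcal{P}_{\fin,1}(H)$ is an atom, so $\mathcal{P}_{\fin,1}(H)$ is atomic (this is essentially Antoniou--Tringali \cite[Theorem 3.9]{An-Tr18}), and moreover a product $A_1\cdots A_n$ of atoms, each a subset of $X$ with at least one element outside $\{1_H\}$, has the property that appending another proper factor strictly enlarges the product or is impossible beyond $|X|-1$ steps. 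I would make this precise by the argument of \cite[Proposition 4.12]{An-Tr18}: any factorization of $X$ has length at most $|X|-1$, because each partial product $A_1\cdots A_k$ strictly grows (an atom is not $\{1_H\}$ and, being an atom, cannot be absorbed). Combining "finite alphabet" with "length $\le |X|-1$" gives finitely many factorizations of $X$ up to permutation, i.e.\ finitely many inequivalent ones; this is \ref{prop:2.7(d)}.

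For the reverse direction I would prove the contrapositive: if $H$ is \emph{not} aperiodic, then $\mathcal{P}_{\fin,1}(H)$ is neither BF-atomic nor FF-atomic. If $H$ is not aperiodic, some non-identity $x \in H$ generates a finite submonoid, so there exist $m > n \ge 1$ with $x^m = x^n$, and consequently the cyclic submonoid $\langle x\rangle$ contains an idempotent $e = x^k$ with $e \ne 1_H$ (take $k$ a suitable multiple in the eventually-periodic part); then the $2$-element set $\{1_H, e\}$ is an idempotent of $\mathcal{P}_{\fin,1}(H)$, and it is an irreducible but not an atom by Proposition~\ref{prop:2.3}. Now consider the element $X := \{1_H, e\}$ itself, or more usefully a set built to have $e$ as a "trap". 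The cleanest route: since $\{1_H,e\}^2 = \{1_H,e\}$, for the set $X = \{1_H, e\}$ we get atomic factorizations blowing up only if $X$ has an atomic factorization at all—but $X$ need not be a non-unit-divisor issue; instead I would use a set like $X = \{1_H, x, e\}$ where $e = x^k$. Here $X = \{1_H,x,x^k\}$ and one checks $\{1_H,x\}\cdot\{1_H,e\}\cdots$ or rather that $X = \{1_H, x\} \cdot \{1_H, e\}^j$ for all $j \ge 1$ once the arithmetic works out, giving arbitrarily long atomic-style factorizations; since $\{1_H,e\}$ is not an atom one instead unfolds $\{1_H,e\}$ into a genuine atomic factorization of some enlargement. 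I expect this to be the main obstacle: producing, from a single non-identity idempotent $e$ of $H$, a fixed set in $\mathcal{P}_{\fin,1}(H)$ admitting atomic factorizations of unbounded length (killing BF-atomic) and hence infinitely many inequivalent atomic factorizations (killing FF-atomic). The idea is that an atom-power of the form $A^{j}$ with $A$ an atom whose square relates back to $e$, or the telescoping identity exploiting $e^2 = e$, furnishes the needed unbounded family; I would model the construction on the corresponding step in \cite[Section 3]{An-Tr18}, adapting it so that the conclusion is phrased for \emph{atomic} factorizations specifically. Once \ref{prop:2.7(b)} $\Rightarrow$ \ref{prop:2.7(a)} and \ref{prop:2.7(e)} $\Rightarrow$ \ref{prop:2.7(a)} are in hand, the cycle \ref{prop:2.7(a)} $\Rightarrow$ \ref{prop:2.7(d)} $\Rightarrow$ \ref{prop:2.7(e)} $\Rightarrow$ \ref{prop:2.7(a)} and \ref{prop:2.7(d)} $\Rightarrow$ \ref{prop:2.7(c)} $\Rightarrow$ \ref{prop:2.7(b)} $\Rightarrow$ \ref{prop:2.7(a)} closes everything.
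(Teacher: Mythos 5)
Your overall architecture (a cycle hubbed at \ref{prop:2.7(a)}, forward via ``every irreducible is an atom'', backward via the contrapositive) is workable, and your argument for \ref{prop:2.7(a)} $\Rightarrow$ \ref{prop:2.7(d)} is essentially sound once the strict growth of the partial products $A_1\cdots A_k$ is attributed to aperiodicity (if $YA=Y$ with $A=\{1_H,a\}$, then $\langle a\rangle\subseteq Y$ is finite) rather than to ``an atom cannot be absorbed''. The first genuine gap is your claim that \ref{prop:2.7(c)} $\Rightarrow$ \ref{prop:2.7(b)} and \ref{prop:2.7(d)} $\Rightarrow$ \ref{prop:2.7(e)} are immediate ``by restriction''. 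They are not: BF-atomic (resp.\ FF-atomic) requires the monoid to be \emph{atomic}, i.e., every non-unit-divisor must admit at least one factorization into atoms, and this existence statement is not obtained by restricting the BF (resp.\ FF) condition to atomic factorizations --- BF only guarantees factorability into \emph{irreducibles}. The missing step is to show that BF (or FF) forces every irreducible of $\mathcal P_{\fin,1}(H)$ to be an atom. This is true but needs an argument: by Lemma~\ref{lem: atoms-irreds}\ref{lem: atoms-irreds(vi)}, an irreducible $A$ that is not an atom satisfies $A^2=A$, so the words $A^{\ast n}$ are factorizations of $A$ of every length, contradicting BF (and yielding infinitely many inequivalent factorizations, contradicting FF). The paper reaches the same conclusion by showing a Dedekind-finite BF monoid is acyclic. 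Without this step your cycle does not close, since \ref{prop:2.7(c)} $\Rightarrow$ \ref{prop:2.7(b)} is the only edge leaving \ref{prop:2.7(c)} in your plan.

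The second gap is the contrapositive \ref{prop:2.7(b)} $\Rightarrow$ \ref{prop:2.7(a)} (and \ref{prop:2.7(e)} $\Rightarrow$ \ref{prop:2.7(a)}), which you leave as an acknowledged obstacle with a construction that is not carried out. It can be completed with a case split you do not make: if some non-identity $x$ satisfies $x^2\in\{1_H,x\}$, then $\{1_H,x\}$ is irreducible but not an atom by Proposition~\ref{prop:2.3}, and since every irreducible of $\mathcal P_{\fin,1}(H)$ is a quark (Lemma~\ref{lem: atoms-irreds}\ref{lem: atoms-irreds(v)}), it admits \emph{no} atomic factorization at all, so atomicity already fails and no unbounded family is needed; otherwise, take $x\ne 1_H$ of finite order with $x^2\notin\{1_H,x\}$, so that $\{1_H,x\}$ is an atom and $\{1_H,x\}^{\ast n}$ is an atomic factorization of the fixed finite set $\langle x\rangle$ for all sufficiently large $n$, killing both BF-atomicity and FF-atomicity. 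The paper bypasses all of this by quoting \cite[Theorem 3.11(ii)]{An-Tr18} for \ref{prop:2.7(a)} $\Leftrightarrow$ \ref{prop:2.7(b)}, a shortcut you could equally take.
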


\begin{proof}
For the equivalence \ref{prop:2.7(a)} $\Leftrightarrow$ \ref{prop:2.7(b)}, see \cite[Theorem 3.11(ii)]{An-Tr18}, where the term ``torsion-free'' is used in place of ``aperiodic'' (but with the same meaning). On the other hand, if $H$ is aperiodic, then $1_H \ne x^2 \ne \allowbreak x$ for every $x\in H\setminus\{1_H\}$, and this ensures by Proposition \ref{prop:2.3} that every irreducible of $\mathcal{P}_{\fin,1}(H)$ is an atom. It is thus clear that \ref{prop:2.7(a)}  $\Leftrightarrow$ \ref{prop:2.7(b)} $\Rightarrow$ \ref{prop:2.7(c)}. Also, it is obvious that any FF-atomic monoid is BF-atomic (cf.~Remark 3.3(2) in \cite{Cos-Tri-2023(a)}), with the result that  \ref{prop:2.7(e)}  $\Rightarrow$ \ref{prop:2.7(b)}. 
So, it remains to show that \ref{prop:2.7(c)} $\Rightarrow$ \ref{prop:2.7(d)} $\Rightarrow$ \ref{prop:2.7(e)}.

\vskip 0.05cm

\ref{prop:2.7(c)} $\Rightarrow$ \ref{prop:2.7(d)}. Let $X \in \mathcal P_{\fin,1}(H)$ and suppose that there exists an integer $k \ge 0$ such that the length of any factorization of $X$ (into irreducibles) is bounded above by $k$. We need to show that the number $n_X$ of factorizations of $X$ in $\mathcal P_{\fin,1}(H)$ is finite. Indeed, since any divisor of $X$ in $\mathcal P_{\fin,1}(H)$ is contained in $X$ and any term in a factorization of $X$ is a divisor of the same set, $n_X$ is bounded above by $|X|^k$ and hence finite (recall from Remark \ref{rem:minimal-factors} that the only factorization of the identity is the empty word).

\vskip 0.05cm

\ref{prop:2.7(d)} $\Rightarrow$ \ref{prop:2.7(e)}. By Lemma \ref{lem: atoms-irreds}\ref{lem: atoms-irreds(ii)}, $\mathcal P_{\fin,1}(H)$ is Dedekind-finite. Since any FF monoid is BF (cf., again, Remark 3.3(2) in \cite{Cos-Tri-2023(a)}), it follows by \cite[Example 5.4(6)]{Cos-Tri-2023(a)} that $\mathcal P_{\fin,1}(H)$ is acyclic. So, every ir\-re\-duc\-i\-ble of $\mathcal P_{\fin,1}(H)$ is an atom (Remark \ref{rem:dedekind-finiteness-and-acyclicity}\ref{rem:dedekind-finiteness-and-acyclicity(1)}) and hence $\mathcal P_{\fin,1}(H)$ is BF-atomic (by the fact that it is BF).
\end{proof}

The limitations inherent in Proposition \ref{prop:2.7} highlight once again that atomic factorizations and, more generally, ``unrestricted factorizations'' into irreducibles are not the best choice possible when it comes to monoids that are not ``nearly cancellative''. The next proposition shows how, at least in the case of reduced power monoids, the paradigm of minimal factorizations allows us to overcome these limitations.

\begin{proposition}
\label{prop:2.8}
The following hold for a monoid $H$:

\begin{enumerate}[label=\textup{(\roman{*})}, mode=unboxed]
\item\label{prop:facts i} $ \mathcal{P}_{\fin,1}(H)$ is FmF.
\item\label{prop:facts ii} BmF-ness and FmF-ness are equivalent properties for $\mathcal{P}_{\fin,1}(H)$.
\end{enumerate}
\end{proposition}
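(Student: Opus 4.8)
The plan is to treat the two parts separately, the only substantive ingredient being that $\mathcal P_{\fin,1}(H)$ is \emph{always} factorable; granted that, both parts reduce to the length bound recorded in Remark~\ref{rem:minimal-factors-power monoids} together with Lemma~\ref{lem: atoms-irreds}\ref{lem: atoms-irreds(i)}, to the effect that a divisor of $X$ in $\mathcal P_{\fin,1}(H)$ is a subset of $X$. To see that $\mathcal P_{\fin,1}(H)$ is factorable, I would induct strongly on $|X|$ with $X$ ranging over the non-unit-divisors of $\mathcal P_{\fin,1}(H)$; since $\mathcal P_{\fin,1}(H)$ is reduced and Dedekind-finite by Lemma~\ref{lem: atoms-irreds}\ref{lem: atoms-irreds(ii)}, these are precisely the sets $X \in \mathcal P_{\fin,1}(H)$ with $|X| \ge 2$ (see also Remark~\ref{rem:dedekind-finiteness-and-acyclicity}\ref{rem:dedekind-finiteness-and-acyclicity(1)}). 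If $|X| = 2$, then $X$ is irreducible by Lemma~\ref{lem: atoms-irreds}\ref{lem: atoms-irreds(iv)}, its only proper subset containing $1_H$ being $\{1_H\}$, so $X$ is a length-one factorization of itself. If $|X| \ge 3$ and $X$ is irreducible we are again done; otherwise Lemma~\ref{lem: atoms-irreds}\ref{lem: atoms-irreds(iv)} produces $Y, Z \subsetneq X$ with $X = YZ$ and $1_H \in Y \cap Z$, where neither $Y$ nor $Z$ equals $\{1_H\}$ (else $X = Z$ or $X = Y$, contradicting properness); hence $2 \le |Y|, |Z| < |X|$, and the inductive hypothesis writes $Y$ and $Z$ --- and therefore $X = YZ$ --- as products of irreducibles.

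For part~(i), factorability guarantees that every non-unit-divisor of $\mathcal P_{\fin,1}(H)$ has at least one minimal factorization, and the set of inequivalent minimal factorizations of a fixed $X$ is finite by a crude count: by Lemma~\ref{lem: atoms-irreds}\ref{lem: atoms-irreds(i)} each letter of a factorization of $X$ divides $X$ and is thus one of the at most $2^{|X|-1}$ subsets of $X$ containing $1_H$, while by Remark~\ref{rem:minimal-factors-power monoids} a minimal factorization of $X$ has length at most $|X|-1$; so $X$ has at most $\sum_{k=1}^{|X|-1} 2^{k(|X|-1)}$ minimal factorizations (the identity $\{1_H\}$ having only the empty one, by Remark~\ref{rem:minimal-factors}), a finite number, whence a fortiori finitely many inequivalent ones. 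This shows $\mathcal P_{\fin,1}(H)$ is FmF.

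For part~(ii), both BmF-ness and FmF-ness incorporate factorability, which holds by the above. For the implication FmF~$\Rightarrow$~BmF I would appeal again to Remark~\ref{rem:minimal-factors-power monoids}: two $\mathcal P_{\fin,1}(H)$-words are equivalent exactly when one is a permutation of the other, so they have the same length; thus finitely many inequivalent minimal factorizations of $X$ force the set of their lengths to be finite, hence bounded. For the converse, the letter count of part~(i) shows that if the minimal factorizations of $X$ have length at most $N$ then there are at most $\sum_{k=0}^{N} 2^{k(|X|-1)}$ of them. In fact both properties hold outright --- FmF by part~(i) and BmF because the bound $|X|-1$ of Remark~\ref{rem:minimal-factors-power monoids} depends only on the finite set $X$ --- so the asserted equivalence is automatic, and I would record it that way.

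I anticipate no genuine difficulty here. The one place that needs a little care is the bookkeeping in the factorability induction: one must check that the factors $Y, Z$ delivered by Lemma~\ref{lem: atoms-irreds}\ref{lem: atoms-irreds(iv)} are bona fide non-units, so that the inductive hypothesis applies to them, and that their cardinalities are strictly less than $|X|$. Beyond that, both parts are routine finiteness counts, and the length bound of Remark~\ref{rem:minimal-factors-power monoids} on which they rest is taken as established.
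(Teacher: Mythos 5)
Your argument is correct, but it takes a genuinely different route from the paper: the paper disposes of Proposition~\ref{prop:2.8} in one line by invoking a general finiteness theorem from \cite{Cos-Tri-2023(a)} (Theorem 4.11 there, together with Remark 5.5), whereas you give a direct, self-contained proof. Your factorability induction on $|X|$ is sound --- Lemma~\ref{lem: atoms-irreds}\ref{lem: atoms-irreds(iv)} quantifies over $Y,Z \in \mathcal P_{\fin,1}(H)$ with $Y,Z \subsetneq X$, and your check that neither factor can be $\{1_H\}$ is exactly the bookkeeping needed to make the cardinalities drop --- and your counting argument (letters are among the at most $2^{|X|-1}$ subsets of $X$ containing $1_H$ by Lemma~\ref{lem: atoms-irreds}\ref{lem: atoms-irreds(i)}, lengths are at most $|X|-1$ by Remark~\ref{rem:minimal-factors-power monoids}) establishes FmF-ness outright; your observation that part~\ref{prop:facts ii} then becomes automatic, since equivalent words have equal length and so FmF forces BmF while BmF holds anyway from the same length bound, matches the logical content of the statement. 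What the paper's citation buys is brevity and consistency with the abstract framework it is building on; what your version buys is that a reader sees exactly which structural features of reduced power monoids (divisors are subsets, minimal factorizations are short) drive the finiteness, without consulting \cite{Cos-Tri-2023(a)}. The only mild caveat is that your proof leans on the length bound of Remark~\ref{rem:minimal-factors-power monoids}, which the paper itself only sketches, but you flag this explicitly and the paper asserts it, so the dependency is legitimate.
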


\begin{proof}
This is straightforward from \cite[Theorem 4.11]{Cos-Tri-2023(a)} (see also \cite[Remark 5.5]{Cos-Tri-2023(a)}). 
\end{proof}

Note that, by \cite[Theorem 4.13(a)--(d)]{An-Tr18}, the reduced power monoid of a monoid $H$ is FmF-atomic if and only if it is BmF-atomic, if and only if $1_H \ne x^2 \ne x$ for every non-identity element $x \in H$. This is clearly a consequence of Propositions \ref{prop:2.3} and \ref{prop:2.8}.

\section{Power monoids with unique minimal factorizations}
\label{sect:3}

In the following, we discuss certain conditions for a reduced power monoid to be HmF or UmF. Most notably, we obtain a complete characterization of UmF-ness when the ground monoid is commutative or cancellative. A key role in this direction will be played by the following:

\begin{definition}\label{dfn: almost-brakable}
A (multiplicatively written) semigroup (resp., monoid) $S$ is \evid{almost-breakable} if, for every $x, y \in S$, either $xy \in \allowbreak \{x, y\}$ or $yx \in \{x, y\}$; and it is \evid{breakable} if $xy \in \{x, y\}$ for all $x, y \in S$.
\end{definition}

Breakable semigroups seem to have been first considered by L.~R\'edei in \cite[Section~27]{Red1967}, and they recently played a role in the Bienvenu-Geroldinger isomorphism problem for power monoids \cite[Example 1.2(2)]{Tri-Yan2023(a)}. We have not been able to find any references to almost-breakable semigroups in the literature; consequently, a few elementary remarks may be appropriate before proceeding.

\begin{remarks}\label{rem: almost-breakable}
\begin{enumerate*}[label=\textup{(\arabic{*})},mode=unboxed]
\item\label{rem: almost-breakable 1} Every breakable semigroup is, of course, almost-breakable. The converse need not be true. For instance, let $S$ be the $3$-element magma described by the following (Cayley) table:
$$
\setlength{\extrarowheight}{3pt}
\begin{array}{l|*{3}{l}}
   & x  & y & z \\
\hline
x  & x  & x & x \\
y  & x  & y & x \\
z  & z  & z & z 
\end{array} 
$$
It is easily checked that $S$ is an almost-breakable semigroup. Yet, $S$ is not breakable, since $yz \notin \{y, z\}$.
\end{enumerate*}

\vskip 0.05cm 

\begin{enumerate*}[label=\textup{(\arabic{*})}, resume,mode=unboxed]
\item\label{rem: almost-breakable 2} It is clear from the definition that any almost-breakable monoid $H$ is idempotent, i.e., $x^2=x$ for every $x\in H$. It is then easy to check that $H$ is Dedekind-finite and reduced: In fact, if $xy = 1_H$ for some $x, y \in H$, then $y = xy^2 = xy = 1_H$ and hence $x = y = 1_H$.
    \end{enumerate*}
\end{remarks}

We will also need a special case of the general construction of \evid{ideal extensions} \cite[Section~4.4]{Cli-Pre-1961}. More precisely, let $H$ and $K$ be disjoint semigroups (written multiplicatively). It is then possible to define a ``joint extension'' of the operations of $H$ and $K$ to a binary operation on $H \cup K$ by taking $xy = yx := y$ for all $x\in H$ and $y \in K$. We denote the magma obtained in this way by $H \oslash K$ and call it the \evid{trivial ideal extension of $K$ by $H$}. The next remark highlights some features of this construction.

\begin{remark}
It is easily seen that the trivial ideal extension of a semigroup $K$ by a semigroup $H$ is still a semigroup. In particular, $H \oslash K$ is a monoid if and only if $H$ is, in which case the identity (resp., the group of units) of $H \oslash K$ is the same as the identity (resp., the group of units) of $H$. It follows that $H \oslash K$ is a unit-cancellative monoid if and only if $H$ is a group and $K$ is a cancellative semigroup. By letting $H$ be a non-trivial group and $K$ be a cancellative semigroup, this offers a systematic method for generating unit-cancellative monoids that are not cancellative, which may be interesting in itself when considering that most of the recent work on factorization in a non-cancellative setting has focused on the unit-cancellative case (see, e.g., \cite{Fa-Tr18}, \cite[Sections~4.1 and 4.3]{Tr20(c)}, and \cite{Cos-Tri-2023(b)}). Here, we say that a monoid $M$ is \evid{unit-can\-cel\-la\-tive} if $xy \ne \allowbreak x \ne yx$ for all $x, y \in M$ such that $y$ is a non-unit, whence a commutative monoid is unit-cancellative if and only if it is acyclic.
\end{remark}

With these preliminaries in place, we are about to prove a couple of lemmas that provide stringent \textit{necessary} conditions on the structure of a monoid $H$ whose reduced power monoid is either HmF or UmF. To this end, we recall that the \evid{order} of an element $x \in H$ is the size of the submonoid generated by the element itself (with the result that the order is either a positive integer or $\infty$).

\begin{lemma}\label{lem:3.1}
The following hold for a monoid $H$:
\begin{enumerate}[label=\textup{(\roman{*})}]
\item\label{lem:3.1(i)} If $\mathcal{P}_{\fin,1}(H)$ is HmF, then every element of $H$ has order $\le 3$.
\item\label{lem:3.1(ii)} If $\mathcal{P}_{\fin,1}(H)$ is UmF, then every element of $H$ has order $\le 2$.
\end{enumerate}
In either case, $H$ is periodic (i.e., any element of $H$ has finite order) and hence Dedekind-finite.
\end{lemma}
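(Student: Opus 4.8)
The plan is to prove both parts by contraposition, in each case exhibiting an explicit set whose minimal factorizations witness failure of HmF-ness (for \ref{lem:3.1(i)}) or UmF-ness (for \ref{lem:3.1(ii)}). The tools needed are all already available: $\mathcal{P}_{\mathrm{fin},1}(H)$ is factorable by Proposition~\ref{prop:2.8}\ref{prop:facts i}; every $2$-element member of $\mathcal{P}_{\mathrm{fin},1}(H)$ is irreducible by Lemma~\ref{lem: atoms-irreds}\ref{lem: atoms-irreds(iv)}; and, by Remark~\ref{rem:minimal-factors-power monoids}, a word $A_1 \ast \cdots \ast A_n$ of irreducibles with $A_1 \cdots A_n = X$ is a minimal factorization of $X$ precisely when no product of fewer than $n$ of the $A_i$'s (in any order) equals $X$, while two such words are equivalent if and only if one is a permutation of the other.

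For \ref{lem:3.1(i)}, I would assume that some $x \in H$ has order $\ge 4$ --- equivalently, that $1_H, x, x^2, x^3$ are pairwise distinct --- and consider $X := \{1_H, x, x^2, x^3\}$. The point is that $X$ admits the two minimal factorizations $\{1_H,x\} \ast \{1_H, x^2\}$ and $\{1_H,x\}\ast\{1_H,x\}\ast\{1_H,x\}$, of lengths $2$ and $3$: indeed $\{1_H,x\}\{1_H,x^2\} = X = \{1_H,x\}^3$, and the only proper sub-products that arise, namely $\{1_H, x\}$, $\{1_H, x^2\}$ and $\{1_H, x, x^2\}$, all have at most three elements and hence differ from $X$. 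This shows $\mathcal{P}_{\mathrm{fin},1}(H)$ is not HmF.

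For \ref{lem:3.1(ii)}, I would first observe that UmF-ness implies HmF-ness for $\mathcal{P}_{\mathrm{fin},1}(H)$, since equivalent $\mathcal{P}_{\mathrm{fin},1}(H)$-words are permutations of each other and so have equal length; hence part \ref{lem:3.1(i)} already bounds the order of every element of $H$ by $3$, and it remains only to exclude order exactly $3$. Assuming $x$ has order $3$, the elements $1_H, x, x^2$ are distinct and the submonoid they generate is closed under multiplication, so $x^3 \in \{1_H, x, x^2\}$; for $X := \{1_H, x, x^2\}$ this gives $\{1_H,x\}^2 = X$ and $\{1_H,x\}\{1_H,x^2\} = \{1_H, x, x^2, x^3\} = X$. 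Both $\{1_H,x\}\ast\{1_H,x\}$ and $\{1_H,x\}\ast\{1_H,x^2\}$ are then minimal factorizations of $X$ (their only proper sub-products are $2$-element sets, which cannot equal the $3$-element set $X$), and they are inequivalent because $\{1_H, x\} \ne \{1_H, x^2\}$. This contradicts UmF-ness, so every element of $H$ has order $\le 2$.

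It then remains to note that order $\le 3$ (resp.\ $\le 2$) says exactly that $H$ is periodic, and that a periodic monoid is Dedekind-finite: if $xy = 1_H$, an easy induction gives $x^n y^n = 1_H$ for all $n \in \mathbb N$, and choosing $a \in \mathbb N$ and $b \in \mathbb N^+$ with $x^a = x^{a+b}$ (possible as $x$ has finite order) yields $1_H = x^a y^a = x^{a+b} y^a = x^b (x^a y^a) = x^b$, so $x \in H^\times$ and $y = x^{-1}$, whence $yx = 1_H$. I expect the only non-routine part to be spotting the right witnessing sets; granting Remark~\ref{rem:minimal-factors-power monoids}, verifying minimality and (in)equivalence of the displayed factorizations is mechanical.
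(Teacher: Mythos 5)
Your proposal is correct and follows essentially the same route as the paper: both parts use the same witness sets $\{1_H,x,x^2,x^3\}$ and $\{1_H,x,x^2\}$ with the same competing minimal factorizations $\{1_H,x\}^{\ast 3}$ vs.\ $\{1_H,x\}\ast\{1_H,x^2\}$ and $\{1_H,x\}^{\ast 2}$ vs.\ $\{1_H,x\}\ast\{1_H,x^2\}$, and the cardinality argument for minimality is exactly what the paper relies on implicitly. The only cosmetic difference is that you prove periodic $\Rightarrow$ Dedekind-finite directly where the paper cites a reference; your argument for that is also correct.
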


\begin{proof}
The ``In particular'' part of the statement is straightforward from items \ref{lem:3.1(i)} and \ref{lem:3.1(ii)}, given that every periodic monoid is Dedekind-finite \cite[Remark 4.9(3)]{Tr20(c)}. Therefore, we focus on the rest.

\vskip 0.05cm

\ref{lem:3.1(i)} Let $H$ be non-trivial (otherwise there is nothing to do), suppose $\mathcal{P}_{\fin,1}(H)$ is HmF, and denote by $\kappa$ the order of a non-identity $x \in H$. We have to show that $\kappa \le 3$.
 
Assume to the contrary that $\kappa \ge 4$ and set $X := \{1_H, x, x^2, x^3\}$. It is clear that $x^3 \notin \{1_H, x, x^2\}$, or else $x^n \in \{1_H, x, x^2\}$ for all $n \in \mathbb N$ (by a routine induction) and hence $\kappa \leq 3$ (which is absurd). So, we gather from Proposition \ref{prop:antichains-and-irreds} that the words $\mathfrak a := \{1_H, x\}^{\ast 3}$ and $\mathfrak b := \{1_H, x\} \ast \{1_H, x^2\}$ are both minimal factorizations of $X$ in $\mathcal{P}_{\fin,1}(H)$, with the result that $ \mathcal{P}_{\fin,1}(H)$ is not HmF (a contradiction).

\vskip 0.05cm

\ref{lem:3.1(ii)} Since every UmF monoid is, obviously, HmF (cf.~Remark 3.3(2) in \cite{Cos-Tri-2023(a)}), we gather from \ref{lem:3.1(i)} that any element of $H$ has order $\le 3$, and it only remains to check that this inequality is strict. Suppose to the contrary that $H$ has an element, $x$, of order $3$. Then, similarly as in the proof of the previous item, $x^2 \notin \{1_H, x\}$. Therefore, we get from Proposition \ref{prop:antichains-and-irreds} that the words $\{1_H, \allowbreak x\} \ast \allowbreak \{1_H,x\}$ and $\{1_H,x\} \ast \{1_H,x^2\}$ are inequivalent minimal factorizations of $\{1_H, x, x^2\}$, contradicting that $\mathcal{P}_{\fin,1}(H)$ is a UmF monoid. 
\end{proof}

\begin{lemma}\label{prop: Condizioni necessarie}
Let $H$ be a monoid and suppose that $\mathcal{P}_{\fin,1}(H)$ is UmF. The following hold: 
\begin{enumerate}[label=\textup{(\roman{*})}]
\item\label{prop: Condizioni necessarie(i)} The group of units of $H$ has order $\le 2$.
\item\label{prop: Condizioni necessarie(ii)} $uy=yu=y$ for all $u\in H^\times$ and $y\in H\setminus H^\times$.
\item\label{prop: Condizioni necessarie(iii)} $H\setminus H^\times$ is an almost-breakable semigroup.
\end{enumerate}
\end{lemma}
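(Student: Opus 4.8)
The plan is to extract all three statements from the structural information in Lemma~\ref{lem:3.1}\ref{lem:3.1(ii)}: since $\mathcal{P}_{\fin,1}(H)$ is UmF, every element of $H$ has order $\le 2$, so $H$ is periodic and Dedekind-finite, every non-unit of $H$ is idempotent (because $x^2 = 1_H$ would make $x$ a unit), and every non-identity unit $u$ satisfies $u^2 = 1_H$. Each of the three parts then follows the same template: assuming a forbidden configuration, I exhibit a set $W \in \mathcal{P}_{\fin,1}(H)$ with $|W| = 4$ that has two \emph{inequivalent} minimal factorizations, contradicting UmF-ness. Throughout I use freely that every two-element subset of $\mathcal{P}_{\fin,1}(H)$ is irreducible (proof of Proposition~\ref{prop:2.3}), that two $\mathcal{P}_{\fin,1}(H)$-words are equivalent precisely when one is a permutation of the other (Remark~\ref{rem:minimal-factors-power monoids}), and that any length-$2$ factorization of a reducible set $W \ne \{1_H\}$ is automatically minimal, since a strictly shorter one would either be empty (forcing $W = \{1_H\}$) or have length $1$ (forcing $W$ irreducible).

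For part \ref{prop: Condizioni necessarie(i)}, recall $g^2 = 1_H$ for every $g \in H^\times$; I claim $|H^\times| \le 2$. If not, pick distinct $u, v \in H^\times \setminus \{1_H\}$; then $uv \notin \{1_H, u, v\}$, so $W := \{1_H, u, v, uv\}$ has four elements, and $W = \{1_H,u\}\{1_H,v\} = \{1_H,u\}\{1_H,uv\}$, the second equality using $u^2 = 1_H$. Hence $\{1_H,u\} \ast \{1_H,v\}$ and $\{1_H,u\} \ast \{1_H,uv\}$ are minimal factorizations of $W$, inequivalent because $\{1_H,v\} \ne \{1_H,uv\}$; contradiction.

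For part \ref{prop: Condizioni necessarie(ii)}, by part \ref{prop: Condizioni necessarie(i)} we may assume $H^\times = \{1_H, u\}$ with $u^2 = 1_H$ (the case $H^\times = \{1_H\}$ being vacuous); fix a non-unit $y$. Both $uy$ and $yu$ are non-units, since, e.g., $uy \in H^\times$ would give $y = u(uy) \in H^\times$. If $uy \ne y$, then $W := \{1_H, u, y, uy\}$ has four elements and $W = \{1_H,u\}\{1_H,y\} = \{1_H,u\}\{1_H,uy\}$, producing two inequivalent minimal factorizations of $W$; impossible, so $uy = y$. Symmetrically, from $W' := \{1_H, u, y, yu\} = \{1_H,y\}\{1_H,u\} = \{1_H,yu\}\{1_H,u\}$ (again using $u^2 = 1_H$) one gets $yu = y$.

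For part \ref{prop: Condizioni necessarie(iii)}, Dedekind-finiteness of $H$ forces $H \setminus H^\times$ to be closed under multiplication, hence a subsemigroup; suppose it is not almost-breakable, so there are non-units $x, y$ with $xy \notin \{x,y\}$ and $yx \notin \{x,y\}$, whence $x \ne y$ (else $xy = x^2 = x$). The key point is that $\{1_H, x, y\}$ is then \emph{irreducible}: by Lemma~\ref{lem: atoms-irreds}\ref{lem: atoms-irreds(iv)}, the proper subsets of $\{1_H,x,y\}$ in $\mathcal{P}_{\fin,1}(H)$ are $\{1_H\}$, $\{1_H,x\}$, $\{1_H,y\}$, and among the products of two of these only $\{1_H,x\}\{1_H,y\} = \{1_H,x,y,xy\}$ and $\{1_H,y\}\{1_H,x\} = \{1_H,x,y,yx\}$ could possibly equal $\{1_H,x,y\}$, which is excluded by hypothesis (the remaining products are $\{1_H\}$, $\{1_H,x\}$, or $\{1_H,y\}$, using $\{1_H,x\}^2 = \{1_H,x\}$ and $\{1_H,y\}^2 = \{1_H,y\}$). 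Now set $W := \{1_H, x, y, xy\}$, which has four elements; then $W = \{1_H,x\}\{1_H,y\}$ and also $W = \{1_H,x\}\{1_H,x,y\}$, the latter using $x^2 = x$. So $\{1_H,x\} \ast \{1_H,y\}$ and $\{1_H,x\} \ast \{1_H,x,y\}$ are minimal factorizations of $W$, inequivalent because $\{1_H,y\} \ne \{1_H,x,y\}$; contradiction, and therefore $H \setminus H^\times$ is almost-breakable. The routine verifications throughout — that the various $W$ truly have four distinct elements, that every displayed factor is irreducible, and that the exhibited pairs of factorizations are minimal and inequivalent — follow from the three bookkeeping facts in the first paragraph; I expect the one genuinely delicate step to be recognizing $\{1_H, x, y\}$ as an irreducible under the negation of almost-breakability, together with the identity $W = \{1_H,x\}\{1_H,x,y\}$, which crucially uses the idempotency of the non-units of $H$.
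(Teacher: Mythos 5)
Your proposal is correct and follows essentially the same route as the paper: derive idempotency of non-units and $u^2=1_H$ for units from Lemma~\ref{lem:3.1}\ref{lem:3.1(ii)}, then in each part exhibit a four-element set with two inequivalent minimal factorizations, the crux of \ref{prop: Condizioni necessarie(iii)} being the irreducibility of $\{1_H,x,y\}$ under the negation of almost-breakability. The only (immaterial) differences are that you spell out the symmetric argument for $yu=y$ in \ref{prop: Condizioni necessarie(ii)} and use $\{1_H,x\}\{1_H,x,y\}$ where the paper uses $\{1_H,x,y\}\{1_H,y\}$.
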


\begin{proof}
Since $\mathcal{P}_{\fin,1}(H)$ is UmF, we gather from Lemma \ref{lem:3.1}\ref{lem:3.1(ii)} that, for every $x\in H$, either $x^2=1_H$ or $x^2=x$. In particular, $x \in H^\times$ if and only if $x^2=1_H$. Besides that, we will use without further comment that any two-element set in $\mathcal P_{\fin,1}(H)$ is, by Proposition \ref{prop:antichains-and-irreds}, irreducible.

\ref{prop: Condizioni necessarie(i)} Assume by way of contradiction that there exist $u, v \in H^\times$ with $u \ne v$ and $u \ne 1_H \ne v$. Then $uv\notin\{u,v\}$, or else $u=1_H$ or $v=1_H$. Recalling from the above that $u^2 = 1_H$, it follows that
$$
\{1_H,u,v,uv\}=\{1_H,u\}\{1_H,v\}=\{1_H,u\}\{1_H,uv\},
$$
in contradiction to the hypothesis that $\mathcal{P}_{\fin,1}(H)$ is UmF. So, $H^\times$ must have at most two elements.

\ref{prop: Condizioni necessarie(ii)} Let $u\in H^\times$ and $y\in H\setminus H^\times$. We may assume that $H^\times$ is non-trivial, otherwise there is nothing to prove. Then we get from item \ref{prop: Condizioni necessarie(i)} that $H^\times = \{1_H, u\}$ for some $u \in H$ with $u^2 = 1_H \ne u$. If $uy = 1_H$, then $y=uy^2=uy=1_H$ (which is absurd); and if $uy=u$, then $u^2y=y=u^2=1_H$ (which is again absurd). Therefore, $uy$ is a non-unit. Considering that
$$
\{1_H, u\}\{1_H, y\} = \{1_H, u, y, uy\} = \{1_H, u, y, u^2 y\} = \{1_H, u\} \{1_H, uy\}, 
$$
it follows that $\{1_H, u\} \ast \{1_H, y\}$ and $\{1_H, u\} \ast \{1_H, uy\}$ are both minimal factorizations of $\{1_H, u, y, uy\}$. This, however, is only possible if $uy = y$, since $uy \ne y$ would contradict the UmF-ness of $\mathcal P_{\fin,1}(H)$.

\ref{prop: Condizioni necessarie(iii)} Since $H$ is Dedekind-finite by Lemma \ref{lem:3.1}, then $H\setminus H^\times$ is a semigroup. Assume for a contradiction that there exist $x,y\in H\setminus H^\times$ such that $xy, yx \notin \{ x,y\}$. By Lemma \ref{lem:3.1}\ref{lem:3.1(ii)}, $x$ and $y$ are idempotent. So, it is clear that $x \ne y$ and $xy \ne 1_H \ne yx$. Moreover, we gather from Lemma \ref{lem: atoms-irreds}\ref{lem: atoms-irreds(iv)} that the set $X := \{1_H,x,y\}$ is irreducible in $\mathcal{P}_{\fin,1}(H)$. In fact, $\{1_H,x\}$ and $\{1_H,y\}$ are the only proper divisors of $X$, and we have from the above that
$$
\{1_H,x\}\{1_H,y\} = \{1_H, x, y, xy\} \ne X \ne \{1_H, x, y, yx\} = \{1_H,y\}\{1_H,x\}. 
$$
Thus, the words $\{1_H, x\}\ast\{1_H, y\}$ and $\{1_H, x, y\}\ast\{1_H,y\}$ are, by Proposition \ref{prop:antichains-and-irreds}, inequivalent minimal factorizations of $\{1_H,x,y,xy\}$, in contradiction with the UmF-ness of $\mathcal{P}_{\fin,1}(H)$.
\end{proof}

We are finally ready to prove the first main result of the paper, that is, a {\it necessary and sufficient} condition for a reduced power monoid to be UmF.

\begin{theorem}\label{thm:UmFness}
The following are equivalent for a monoid $H$:
\begin{enumerate}[label=\textup{(\alph{*})}]
\item\label{thm:UmFness(i)} $\mathcal{P}_{\fin,1}(H)$ is UmF. 
\item\label{thm:UmFness(ii)} $H \setminus H^\times$ is an almost-breakable subsemigroup of $H$, the group of units of $H$ has order $\le 2$,  $H$ is the trivial ideal extension of $H \setminus H^\times$ by $H^\times$, and $\mathcal{P}_{\fin,1}((H \setminus H^\times) \cup \{1_H\})$ is UmF.
\end{enumerate}
\end{theorem}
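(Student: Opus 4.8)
The plan is to establish the two implications separately. Throughout, set $H_\circ := (H \setminus H^\times) \cup \{1_H\}$, and note that $H_\circ$ is a submonoid of $H$ as soon as $H$ is Dedekind-finite, since then a product of two non-units of $H$ is again a non-unit. The implication \ref{thm:UmFness(i)} $\Rightarrow$ \ref{thm:UmFness(ii)} is largely a repackaging of earlier results: assuming $\mathcal{P}_{\fin,1}(H)$ is UmF, Lemma \ref{lem:3.1}\ref{lem:3.1(ii)} makes $H$ Dedekind-finite, so $H \setminus H^\times$ is a subsemigroup of $H$, which by Lemma \ref{prop: Condizioni necessarie} is almost-breakable, has $|H^\times| \le 2$, and satisfies $uy = yu = y$ for all $u \in H^\times$ and $y \in H \setminus H^\times$; since $H^\times$ is a subgroup and products of non-units are non-units, this last identity is precisely the assertion $H = H^\times \oslash (H \setminus H^\times)$. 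The one genuinely new clause is that $\mathcal{P}_{\fin,1}(H_\circ)$ is UmF, and I would obtain it by observing that $\mathcal{P}_{\fin,1}(H_\circ)$ is a divisor-closed submonoid of $\mathcal{P}_{\fin,1}(H)$, i.e., a submonoid containing every $\mathcal{P}_{\fin,1}(H)$-divisor of each of its elements: if $Y$ divides $X \in \mathcal{P}_{\fin,1}(H_\circ)$ in $\mathcal{P}_{\fin,1}(H)$, then $Y \subseteq X \subseteq H_\circ$ by Lemma \ref{lem: atoms-irreds}\ref{lem: atoms-irreds(i)}, and any equation $X = UYV$ with $U, V \subseteq X$ already holds in $\mathcal{P}_{\fin,1}(H_\circ)$. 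Via Lemma \ref{lem: atoms-irreds}\ref{lem: atoms-irreds(iv)}, it then follows that the irreducibles, the factorizations, and the preorder $\sqeq$ attached to any $X \in \mathcal{P}_{\fin,1}(H_\circ)$ are the same whether computed in $\mathcal{P}_{\fin,1}(H_\circ)$ or in $\mathcal{P}_{\fin,1}(H)$, whence minimal factorizations coincide and UmF-ness descends.

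For \ref{thm:UmFness(ii)} $\Rightarrow$ \ref{thm:UmFness(i)}, if $H^\times$ is trivial then $H_\circ = H$ and there is nothing to prove, so assume $H^\times = \{1_H, u\}$ with $u^2 = 1_H \ne u$. Let $B$ be the two-element monoid $\{\mathbf 0, \mathbf 1\}$ with identity $\mathbf 0$ and $\mathbf 1^2 = \mathbf 1$. Using the trivial-ideal-extension structure of $H$ — so that $uy = yu = y$ for $y \in H \setminus H^\times$ while products of two non-units are taken in $H \setminus H^\times$ — a direct set-theoretic computation gives, for all $X, Y \in \mathcal{P}_{\fin,1}(H)$,
$$(XY) \setminus \{u\} = (X \setminus \{u\})\,(Y \setminus \{u\}) \qquad \text{and} \qquad u \in XY \iff u \in X \text{ or } u \in Y,$$
the product on the right being computed in $\mathcal{P}_{\fin,1}(H_\circ)$. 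Consequently, the map sending $X$ to $(X \setminus \{u\}, \mathbf 1)$ when $u \in X$ and to $(X \setminus \{u\}, \mathbf 0)$ otherwise is a monoid isomorphism $\mathcal{P}_{\fin,1}(H) \cong \mathcal{P}_{\fin,1}(H_\circ) \times B$. It therefore suffices to show that $M \times B$ is UmF whenever $M$ is a reduced UmF monoid.

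To do so, I would first pin down $\mathscr I(M \times B)$: it consists of the pairs $(a, \mathbf 0)$ with $a \in \mathscr I(M)$, together with the single extra element $(1_M, \mathbf 1)$ — the crux being that $(m, \mathbf 1) = (1_M, \mathbf 1)(m, \mathbf 0)$ exhibits $(m, \mathbf 1)$ as a product of two non-unit-divisors each properly dividing it for \emph{every} $m \ne 1_M$, whereas $(1_M, \mathbf 1)$ admits no such decomposition. I would then analyze factorizations: a factorization of $(m, \mathbf 0)$ uses no copy of $(1_M, \mathbf 1)$ and matches, in a length-preserving and $\sqeq$-compatible way, a factorization of $m$ in $M$; a factorization of $(m, \mathbf 1)$ with $m \ne 1_M$ consists of one or more copies of the idempotent irreducible $(1_M, \mathbf 1)$ interleaved with a factorization of $m$, and since a superfluous copy of $(1_M, \mathbf 1)$ can always be deleted, the minimal such factorizations are exactly $(1_M, \mathbf 1)$ prepended to a minimal factorization of $m$; and $(1_M, \mathbf 1)$, being irreducible, has only the trivial factorization. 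In each case the minimal factorization of an element of $M \times B$ is unique up to equivalence precisely because the corresponding minimal factorization in $M$ is, which is exactly the UmF-ness of $M \times B$.

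I expect the main obstacle to lie in this last step: getting $\mathscr I(M \times B)$ exactly right (the asymmetry between the reducible $(m, \mathbf 1)$ for $m \ne 1_M$ and the irreducible $(1_M, \mathbf 1)$), and then carefully tracking how $\sqeq$, and hence the minimality of factorizations, behaves under the insertion or deletion of the idempotent letter $(1_M, \mathbf 1)$. Everything else reduces to routine bookkeeping with Lemma \ref{lem: atoms-irreds} and with the necessary-conditions lemmas already established.
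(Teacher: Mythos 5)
Your proof is correct, and while the implication \ref{thm:UmFness(i)} $\Rightarrow$ \ref{thm:UmFness(ii)} follows the paper essentially verbatim (Dedekind-finiteness from Lemma \ref{lem:3.1}, the structural clauses from Lemma \ref{prop: Condizioni necessarie}, and descent of UmF-ness to the divisor-closed submonoid $\mathcal{P}_{\fin,1}(H_\circ)$ --- which the paper obtains by citation rather than by your inline argument), your treatment of \ref{thm:UmFness(ii)} $\Rightarrow$ \ref{thm:UmFness(i)} is genuinely different in its packaging. The paper works directly inside $\mathcal{P}_{\fin,1}(H)$: from the identity $\{1_H,u\}Y = Y \cup \{u\}$ it deduces that the only irreducible containing $u$ is $\{1_H,u\}$, and then shows by hand that every minimal factorization of a set $X \ni u$ is, up to permutation, $\{1_H,u\}$ prepended to a minimal factorization of $X \setminus \{u\}$ in $\mathcal{P}_{\fin,1}(K)$. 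You instead upgrade that same identity to the isomorphism $\mathcal{P}_{\fin,1}(H) \cong \mathcal{P}_{\fin,1}(H_\circ) \times B$ (which checks out: $(XY)\setminus\{u\} = (X\setminus\{u\})(Y\setminus\{u\})$ and $u \in XY$ iff $u \in X$ or $u \in Y$, using that $H_\circ$ is a submonoid and $u$ acts as an identity on non-units) and then prove the reusable statement that $M \times B$ is UmF whenever $M$ is a reduced Dedekind-finite UmF monoid. The core computation is the same in both arguments; what your route buys is a cleaner structural statement and a transfer lemma of independent interest, at the cost of the extra bookkeeping you correctly identify --- pinning down $\mathscr{I}(M \times B)$ as $\{(a,\mathbf 0): a \in \mathscr{I}(M)\} \cup \{(1_M,\mathbf 1)\}$ (which needs $M$ reduced and Dedekind-finite, available here by Lemma \ref{lem: atoms-irreds}\ref{lem: atoms-irreds(ii)}) and tracking $\sqeq$ under deletion of the central idempotent letter $(1_M,\mathbf 1)$. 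Both of these you handle, or sketch, correctly.
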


\begin{proof}

\ref{thm:UmFness(i)} $\Rightarrow$ \ref{thm:UmFness(ii)}. Since $\mathcal P_{\fin,1}(H)$ is UmF (by hypothesis), we get from Lemma \ref{lem:3.1} that $H$ is Dedekind-finite and hence $K:= (H \setminus H^\times)\cup\{1_H\}$ is a submonoid of $H$. It follows from \cite[Proposition 3.2(iii)]{An-Tr18} that 
$\mathcal{P}_{\fin,1}(K)$ is a divisor-closed submonoid of 
$\mathcal{P}_{\fin,1}(H)$, and then from 
\cite[Proposition 3.5(iii)]{Cos-Tri-2023(a)} that 
$\mathcal{P}_{\fin,1}(K)$ is UmF. By 
Lemma~\ref{prop: Condizioni necessarie}, this suffices to complete the 
first half of the proof.

\ref{thm:UmFness(ii)} $\Rightarrow$ \ref{thm:UmFness(i)}. Let $H \setminus H^\times$ be an almost-breakable subsemigroup of $H$ and set $K := (H \setminus H^\times) \cup \{1_H\}$. If $H^\times$ is trivial, then we have nothing to do. Thus, for the remainder of  
the proof, we suppose that 
$$
H^\times = \{ 1_H, u \},
\qquad\text{for some }
u \in H \text{ with } 
u^2 = 1_H \neq u. 
$$
By definition of a trivial ideal extension, we have that
\begin{equation}
\label{equ:multiplication-by-a-central-element}
\{1_H,u\}Y = Y\{1_H,u\} = Y\cup\{u\},
\qquad\text{for every } Y\in \mathcal{P}_{\fin,1}(H). 
\end{equation}
Consequently, any set of the form $\{1_H, u, y_1, \ldots, y_n\}$ with $y_1, \ldots, y_n \in H \setminus H^\times$ (and $n \in \mathbb N^+$) is not irreducible in $\mathcal P_{\fin,1}(H)$, because it factors as $\{1_H, u\} \allowbreak \{1_H, y_1, \ldots, y_n\}$. In other words, 
\begin{equation}\label{equ:irreds-containing-u}
\text{if } A \text{ is an irreducible of } \mathcal P_{\fin,1}(H) \text{ with } u \in A, \text{ then } A = \{1_H, u\}. 
\end{equation}
Let $X$ be an arbitrary element of $\mathcal{P}_{\fin,1}(H)$. If $u \notin X$, then 
$X$ lies in $\mathcal{P}_{\fin,1}(K)$ and, by the hypothesis that 
$\mathcal{P}_{\fin,1}(K)$ is UmF, it has an essentially unique minimal 
factorization in $\mathcal{P}_{\fin,1}(H)$. So, assume $u \in X$ and let 
$\mathfrak{a} = Y_1 \ast \cdots \ast Y_n$ be a minimal factorization of $X$. Since the product of any two non-units of $H$ is still a non-unit (by the hypothesis that $H \setminus H^\times$ is a subsemigroup of $H$), it is clear that $u \in Y_i$ for some $i \in \allowbreak \llb 1, n \rrb$. It then follows from Eqs.~\eqref{equ:multiplication-by-a-central-element} and \eqref{equ:irreds-containing-u} that, without loss of generality, $Y_1 = \{1_H, u\}$ (recall that two factorizations in $\mathcal{P}_{\fin,1}(H)$ are equivalent if and only if they differ in the order of the factors). 

If $Y_j = \{1_H, u\}$ for some $j \in \llb 2, n \rrb$, then $Y_1 Y_j = \{1_H, u\}^2 = \{1_H, u\} = Y_1$, which, again by  Eqs.~\eqref{equ:multiplication-by-a-central-element} and \eqref{equ:irreds-containing-u}, would contradict the minimality of $\mathfrak a$. Therefore, $Y_j\ne  \{1_H,u\}$ for any $j \in \llb 2, n \rrb$, and $\mathfrak a=\{1_H,u\}\ast \allowbreak Y_2\ast \cdots\ast Y_n$ with $Y_2,\dots,Y_n\in \mathcal{P}_{\fin,1}(K)$.
As a result, $X = (Y_2\cdots Y_n)\cup \{u\}$. Since $u\notin Y_2\cdots Y_n$, it follows that $Y_2\cdots Y_n=X\setminus\{u\}$. Moreover, $\mathfrak a' := Y_2\ast \cdots\ast Y_n$ is a factorization of $X\setminus \{u\}$. 

We claim that $\mathfrak a'$ is actually a minimal factorization of $X \setminus \{u\}$.  
Otherwise, there exists a permutation $\mathfrak b$ of a \textit{proper} subword of $\mathfrak a'$  
such that $\pi(\mathfrak b)=\pi(\mathfrak a')=X \setminus \{u\}$, where $\pi$ denotes the  
factorization hom\-o\-mor\-phism of $\mathcal{P}_{\fin,1}(H)$. But then  
$\pi(\{1_H,u\}\ast\mathfrak b)=X=\pi(\mathfrak a)$, contradicting the minimality of $\mathfrak a$. 

It remains to show that, up to a permutation of its letters, $\mathfrak a$ is the unique  
minimal factorization of $X$. From the above, every minimal factorization of $X$ is of the  
form $\{1_H,u\}\ast\mathfrak z$, where $\mathfrak z$ is a minimal factorization of  
$X\setminus\{u\}$ in $\mathcal{P}_{\fin,1}(K)$. Since $\mathcal{P}_{\fin,1}(K)$ is UmF  
(by hypothesis), it follows that $\mathfrak z$ and $\mathfrak a'$ are equivalent (that is, a permutation of each other), and  
thus $\mathfrak a$ is essentially unique.  
\end{proof}

A straightforward consequence of Theorem \ref{thm:UmFness} is the following characterization,  
which applies in particular to cancellative and acyclic monoids.

\begin{corollary}\label{cor: H cancellative}
Let $H$ be either a group or a monoid whose non-units do not form an almost-breakable semigroup. Then, the reduced power monoid of a monoid $H$ is UmF if and only if $H$ is either trivial or isomorphic to the integers modulo $2$ under addition.
\end{corollary}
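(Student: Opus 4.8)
The plan is to read the corollary off Theorem~\ref{thm:UmFness} together with Lemma~\ref{prop: Condizioni necessarie}, treating separately the two alternatives allowed for $H$ in the hypothesis (``$H$ is a group'' versus ``the non-units of $H$ do not form an almost-breakable semigroup'').

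Suppose first that $\mathcal{P}_{\fin,1}(H)$ is UmF. If $H$ is a group, then $H = H^\times$, so Lemma~\ref{prop: Condizioni necessarie}\ref{prop: Condizioni necessarie(i)} forces $|H| = |H^\times| \le 2$; a group of order at most $2$ is either trivial or isomorphic to the integers modulo $2$ under addition, which settles this sub-case. If instead the non-units of $H$ do not form an almost-breakable semigroup, then the first requirement of Theorem~\ref{thm:UmFness}\ref{thm:UmFness(ii)} (that $H \setminus H^\times$ be an almost-breakable subsemigroup of $H$) is not met, so condition~\ref{thm:UmFness(ii)} fails; hence, by the equivalence in Theorem~\ref{thm:UmFness}, $\mathcal{P}_{\fin,1}(H)$ is \emph{not} UmF, contradicting our assumption. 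Thus this alternative cannot occur, and the ``only if'' direction is complete.

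For the converse, assume $H$ is trivial or isomorphic to the integers modulo $2$ under addition; in either case $H$ is a finite group, so it does fall under the hypothesis of the corollary. If $H$ is trivial, then $\mathcal{P}_{\fin,1}(H)$ is the trivial monoid and there is nothing to prove. If $|H| = 2$, write $H = \{1_H, u\}$ with $u^2 = 1_H$; then $\mathcal{P}_{\fin,1}(H) = \{\{1_H\}, \{1_H, u\}\}$, the two-element set $\{1_H, u\}$ is irreducible by Lemma~\ref{lem: atoms-irreds}\ref{lem: atoms-irreds(iv)} and in fact a quark by Lemma~\ref{lem: atoms-irreds}\ref{lem: atoms-irreds(v)}, so by Remark~\ref{rem:minimal-factors} its only factorization into irreducibles is the one-letter word $\{1_H, u\}$, while the identity $\{1_H\}$ factors only as the empty word. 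Hence every element of $\mathcal{P}_{\fin,1}(H)$ has an essentially unique minimal factorization, i.e.\ $\mathcal{P}_{\fin,1}(H)$ is UmF. (Alternatively, apply Theorem~\ref{thm:UmFness} with $K := (H \setminus H^\times) \cup \{1_H\} = \{1_H\}$: the set $H \setminus H^\times = \emptyset$ is vacuously an almost-breakable subsemigroup, $|H^\times| \le 2$, $H$ is trivially the trivial ideal extension of $\emptyset$ by $H^\times$, and $\mathcal{P}_{\fin,1}(K)$ is the trivial monoid, which is UmF.)

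There is no substantial obstacle here: the whole weight of the argument rests on Theorem~\ref{thm:UmFness} and Lemma~\ref{prop: Condizioni necessarie}. The only points needing a modicum of care are the bookkeeping of the two alternatives — in the ``only if'' direction the ``non-almost-breakable'' alternative yields a vacuous implication, while in the ``if'' direction both candidate ground monoids are groups and hence covered by the first alternative — together with the elementary verification that $\mathcal{P}_{\fin,1}$ of the integers modulo $2$ is UmF.
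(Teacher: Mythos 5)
Your proof is correct and follows exactly the route the paper intends: the paper gives no explicit proof, presenting the corollary as a straightforward consequence of Theorem~\ref{thm:UmFness} (whose condition~\ref{thm:UmFness(ii)} already packages the unit-group bound from Lemma~\ref{prop: Condizioni necessarie}\ref{prop: Condizioni necessarie(i)} and the almost-breakability requirement you invoke). Your case split and the direct verification that $\mathcal{P}_{\fin,1}(\mathbb Z/2\mathbb Z)$ is UmF are exactly the missing routine details.
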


Note that, by \cite[Corollary 4.15]{An-Tr18}, a reduced power monoid is UmF-atomic if and only if its ground monoid, say $H$, is trivial. This is a special case of Corollary \ref{cor: H cancellative}. In fact, \cite[Theorem 4.13]{An-Tr18} shows that $\mathcal P_{\fin,1}(H)$ is atomic if and only if $x^2 \notin \{1_H,x\}$ for every non-identity $x \in H$, and the latter condition, in turn, entails that every irreducible of  $\mathcal P_{\fin,1}(H)$ is an atom (Proposition \ref{prop:2.3}), $H \setminus H^\times$ is not an almost-breakable semigroup, and $H$ is not isomorphic to the additive group of integers modulo $2$.

\section{Focus on the almost-breakable case}\label{sec: UmFness and almost-breakable}

By Theorem \ref{thm:UmFness}, characterizing the UmF-ness of $\mathcal{P}_{\fin,1}(H)$ comes down to the case when the ground monoid $H$ is almost-breakable. Through the present section, we thus restrict our attention to this case. Before getting to the punchline, we prove a series of preliminary results. We recall from Section~\ref{subsec:irreds-atoms-quarks} that we write $x \simeq_H y$ to mean that two elements $x$ and $y$ in a monoid $H$ are \textit{associated}, i.e., $HxH = HyH$.

\begin{lemma}\label{lem: almost-breakable} 
Let $H$ be an almost-breakable semigroup and let $x,y\in H$. The following hold:

\begin{enumerate*}[label=\textup{(\roman{*})}]
\item\label{lem: almost-breakable 1} The principal (two-sided) ideals of $H$ form a chain under set inclusion. 
\end{enumerate*}

\vskip 0.05cm 

\begin{enumerate*}[label=\textup{(\roman{*})},resume]
\item\label{lem: almost-breakable 2} If $HxH \subsetneq HyH$, then $xyx=x$ and $xy \simeq_H yx \simeq_H x$.
\end{enumerate*}

\vskip 0.05cm 

\begin{enumerate*}[label=\textup{(\roman{*})},resume]
\item\label{lem: almost-breakable 3} If $x \simeq_H y$, then $\{xy,yx\}=\{x,y\}$. 
\end{enumerate*}

\vskip 0.05cm 

\begin{enumerate*}[label=\textup{(\roman{*})},resume]
\item\label{lem: almost-breakable 4} There are no elements $x,y,z\in H$ with $HxH \subsetneq HyH$ and $HxH \subsetneq HzH$ such that $xy \ne x \ne zx$.
\end{enumerate*}

\vskip 0.05cm 

\begin{enumerate*}[label=\textup{(\roman{*})},resume]
\item\label{lem: almost-breakable 5} If $HxH \subsetneq HyH$, $yx \ne x$, and $x \simeq_H x'$ for some $x'\in H$, then $x'y=x'$.
\end{enumerate*}
\end{lemma}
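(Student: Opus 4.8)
The plan rests on the observation that every almost-breakable semigroup is a band (take $x=y$ in Definition~\ref{dfn: almost-brakable}, so $x^2=x$), after which idempotency does most of the work. For \ref{lem: almost-breakable 1} I would apply almost-breakability to $x$ and $y$: up to interchanging them, $xy\in\{x,y\}$; if $xy=x$ then $x=xy^2\in HyH$, so $HxH\subseteq HyH$, while if $xy=y$ then $y=x^2y\in HxH$, so $HyH\subseteq HxH$ --- the recurring trick being that idempotency lets one pad such products with extra copies of $x$ or $y$ so as to exhibit a given element inside the relevant principal ideal. For \ref{lem: almost-breakable 2}, note that $HxH\subsetneq HyH$ forces $y\notin HxH$, which by the computations just made rules out both $xy=y$ and $yx=y$; almost-breakability then leaves $xy=x$ or $yx=x$, and in either case $xyx=x$ by idempotency. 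Granting $xyx=x$, one has $yx=yx^2\in HxH$ and (since $x(yx)=xyx=x$) $x=x(yx)(yx)\in H(yx)H$, so $yx\simeq_H x$; symmetrically $xy\simeq_H x$, and $xy\simeq_H yx\simeq_H x$ by transitivity.

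For \ref{lem: almost-breakable 3}--\ref{lem: almost-breakable 5} I would invoke the structure theory of bands (see, e.g., \cite{Cli-Pre-1961}): $H$ is a semilattice $Y$ of rectangular bands $(H_\alpha)_{\alpha\in Y}$ --- that is, $H=\bigsqcup_{\alpha\in Y}H_\alpha$ with $H_\alpha H_\beta\subseteq H_{\alpha\wedge\beta}$, the $H_\alpha$ are exactly the $\simeq_H$-classes, and $HxH\subseteq HyH$ iff $\alpha\le\beta$ whenever $x\in H_\alpha$, $y\in H_\beta$. Two remarks pin down the shape of this decomposition. First, each $H_\alpha$ is again almost-breakable, and an almost-breakable rectangular band $I\times\Lambda$ (with product $(i,\lambda)(j,\mu)=(i,\mu)$) must have $|I|=1$ or $|\Lambda|=1$, since otherwise two elements differing in both coordinates violate almost-breakability; hence every $H_\alpha$ is a left-zero or a right-zero semigroup. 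Second, applying almost-breakability to $x\in H_\alpha$, $y\in H_\beta$ yields $\alpha\le\beta$ or $\beta\le\alpha$, so $Y$ is a chain (consistent with \ref{lem: almost-breakable 1}). Now \ref{lem: almost-breakable 3} is immediate: if $x\simeq_H y$ they share a class $H_\alpha$, which is left-zero ($xy=x$, $yx=y$) or right-zero ($xy=y$, $yx=x$), so $\{xy,yx\}=\{x,y\}$.

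Items \ref{lem: almost-breakable 4} and \ref{lem: almost-breakable 5} then follow the same template: the ``$\ne x$'' hypotheses decide whether the relevant $H_\alpha$ is left- or right-zero, and idempotency closes the argument. For \ref{lem: almost-breakable 4}, write $x\in H_\alpha$, $y\in H_\beta$, $z\in H_\gamma$; the hypotheses give $\alpha<\beta$ and $\alpha<\gamma$, so $xy\in H_{\alpha\wedge\beta}=H_\alpha$ and $zx\in H_{\alpha\wedge\gamma}=H_\alpha$. Since $x(xy)=xy$, the class $H_\alpha$ cannot be left-zero, hence is right-zero; but then $(zx)x=x$ holds in $H_\alpha$ whereas $(zx)x=zx$ holds in $H$, forcing $zx=x$, contrary to hypothesis. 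For \ref{lem: almost-breakable 5}, write $x,x'\in H_\alpha$ and $y\in H_\beta$ with $\alpha<\beta$ (from $HxH\subsetneq HyH$); then $yx\in H_\alpha$, and if $H_\alpha$ were right-zero we would get $yx=(yx)x=x$, against $yx\ne x$, so $H_\alpha$ is left-zero; finally $x'y\in H_{\alpha\wedge\beta}=H_\alpha$ and $x'(x'y)=x'$ in $H_\alpha$ while $x'(x'y)=x'y$ in $H$, so $x'y=x'$.

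The one genuinely delicate point is \ref{lem: almost-breakable 3} --- equivalently, the fact that the $\simeq_H$-classes of an almost-breakable semigroup are one-sided; everything afterwards is arithmetic of idempotents. If one wished to avoid quoting the band structure theorem, \ref{lem: almost-breakable 3} could be established from scratch, but at the cost of a lengthy case analysis: up to interchanging $x$ and $y$ it suffices to show that $x\simeq_H y$ and $xy=x$ imply $yx=y$, and one would unwind a representation $y=uxv$ (possible since $x\mid_H y$) and apply almost-breakability repeatedly to the subproducts it produces. I expect this to be the step that absorbs the real content, and the structural route --- also compatible with the fact that \cite{Cli-Pre-1961} is already cited --- is the one I would take.
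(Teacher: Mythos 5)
Your proof is correct, and for items (iii)--(v) it takes a genuinely different route from the paper's. Items (i) and (ii) match the paper essentially verbatim: almost-breakability plus idempotency, with the same padding trick to land the relevant products inside the right principal ideals. For (iii)--(v) the paper stays elementary and self-contained: (iii) is exactly the ``from scratch'' computation you describe as lengthy but which in fact takes three lines (WLOG $xy=x$, write $y=uxv$, and manipulate until $yx=y$ falls out); (iv) is deduced from (ii) and (iii) via the single identity $(xy)(zx)=x(yz)x=x$; and (v) is a short case analysis resting only on (ii) and (iii). You instead invoke the classical theorem (available in \cite{Cli-Pre-1961}) that a band is a semilattice of rectangular bands, observe that almost-breakability forces each rectangular component to be left-zero or right-zero and the index semilattice to be a chain, and read (iii)--(v) off the resulting dichotomy. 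All the structural facts you quote (the components are the $\simeq_H$-classes, $H_\alpha H_\beta\subseteq H_{\alpha\wedge\beta}$, and $HxH\subseteq HyH$ iff $\alpha\le\beta$) do hold for bands, and your deductions of (iv) and (v) are sound once one adds the trivial observation that a singleton class is already excluded by the hypotheses $xy\ne x$ (resp.\ $yx\ne x$), so that ``not left-zero'' genuinely entails ``right-zero with at least two elements''. What your route buys is a clean structural picture --- an almost-breakable semigroup is a chain of left-zero and right-zero bands --- which turns (iii)--(v) into one-liners and would also illuminate Propositions \ref{prop: irreds and square-free} and \ref{prop: P}; what it costs is importing a nontrivial classical theorem where the authors manage with half a page of bare-hands computation.
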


\begin{proof}
Recall from Remark \ref{rem: almost-breakable}\ref{rem: almost-breakable 2} that every almost-breakable semigroup is idempotent, and note that the opposite monoid of an almost-breakable monoid is itself almost-breakable. We will freely use these simple facts in the remainder without further comment.

\vskip 0.05cm

\ref{lem: almost-breakable 1} Since $H$ is almost-breakable, either $xy\in \{x,y\}$ or $yx\in\{x,y\}$. Consequently, either $x \in \{xy, yx\}$ or $y \in \{xy, yx\}$. In the former case, $HxH\subseteq HyH$; and in the latter, $HyH \subseteq HxH$.

\vskip 0.05cm

\ref{lem: almost-breakable 2} By the almost-breakability of $H$, either $xy\in\{x,y\}$ or $yx \in \allowbreak \{x,y\}$. Assume  $xy \in \{x,y\}$ (the other case is analogous). Then, $xy=x$ (otherwise $y\in HxH$), and so $xyx=x$. It follows that $x\in HxyH$ and $x \in \allowbreak HyxH$, so $HxH \subseteq HxyH$ and $HxH \subseteq HyxH$. The reverse inclusions are trivial.

\vskip 0.05cm
    
\ref{lem: almost-breakable 3} Assume without loss of generality that $xy=x$ (otherwise, flip the roles of $x$ and $y$, or work in the opposite monoid of $H$). Then $xyx=x^2 = x$ and, being $HxH=HyH$, $y=uxv$ for some $u, v\in H$. It follows that $yv=yxv=y$ and so $yxy=yxyv=yxv=y$. Thus, $y=yxy=yx$.

\vskip 0.05cm
    
\ref{lem: almost-breakable 4} Assume the contrary. We know from \ref{lem: almost-breakable 1} that $HyH \subseteq HzH$ or $HzH \subseteq HyH$. So, we get from \ref{lem: almost-breakable 2} and \ref{lem: almost-breakable 3} that $yz\in HyH \cup HzH \supsetneq HxH$. Moreover, the hypotheses imply by \ref{lem: almost-breakable 2} that $xy \simeq_H x \simeq_H zx$. Thus, we conclude from \ref{lem: almost-breakable 3} that $\{xy, zx\}\ni (xy)(zx)=x(yz)x=x$, contradicting that $xy\ne x\ne zx$.

\vskip 0.05cm
    
\ref{lem: almost-breakable 5} Assume to the contrary that there exists $x'\in H$ such that $HxH=Hx'H$ and $x'y\ne x'$. It is then clear that $x'y\ne y$, or else $HxH \subsetneq HyH \subseteq Hx'H=HxH$ (which is absurd). So, by the almost-breakability of $H$, we have by \ref{lem: almost-breakable 2} that $yx'=x'$. Analogously, we get from the hypothesis that $xy=x$. Thus $x'x=yx'x=x'xy$. Since $x\simeq_H x'$, by \ref{lem: almost-breakable 3}, either $x'x=x$ or $x'x=x'$. If $x'x=x$, then $x=x'x=y(x'x)=yx$, while if $x'x=x'$, then $x'=x'x=(x'x)y=x'y$. This is a contradiction.
\end{proof}

Next, we characterize the irreducibles of the reduced power monoid of an almost-breakable monoid and describe an interesting property of its factorizations.

\begin{proposition}\label{prop: irreds and square-free}
The following hold for an almost-breakable monoid $H$:

\begin{enumerate}[label=\textup{(\roman{*})}, mode=unboxed]

\vskip 0.05cm

\item\label{prop: irreds and square-free i} The irreducibles of $\mathcal P_{\fin,1}(H)$ are precisely the $2$-element subsets of $H$ containing the identity.

\vskip 0.05cm

\item\label{prop: irreds and square-free ii} Every element of $\mathcal P_{\fin,1}(H)$ admits a square-free factorization into irreducibles.
\end{enumerate}
\end{proposition}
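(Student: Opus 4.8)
The plan is to isolate a single structural fact and deduce both parts from it: \emph{for every $X \in \mathcal{P}_{\fin,1}(H)$ with $|X| \ge 2$, if $x$ is an element of $X \setminus \{1_H\}$ whose principal ideal $HxH$ is minimal, under inclusion, among the principal ideals of the non-identity elements of $X$, then $xX \subseteq X$ or $Xx \subseteq X$.} Such an $x$ exists because, by Lemma~\ref{lem: almost-breakable}\ref{lem: almost-breakable 1}, the principal ideals of $H$ form a chain, and only finitely many of them occur among the finitely many elements of $X$. Granting the fact, I would obtain part~\ref{prop: irreds and square-free i} by a short ``quark'' argument and part~\ref{prop: irreds and square-free ii} by induction on $|X|$.

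To prove the fact, write $X = \{1_H, x_1, \dots, x_n\}$ with $x_1, \dots, x_n$ pairwise distinct elements of $H \setminus \{1_H\}$ and with $Hx_1H$ minimal, and recall that $H$ is idempotent by Remark~\ref{rem: almost-breakable}\ref{rem: almost-breakable 2}. Split the indices $j \in \{2, \dots, n\}$ according to whether $Hx_1H = Hx_jH$ or $Hx_1H \subsetneq Hx_jH$; no other case arises, by minimality of $Hx_1H$. If $Hx_1H = Hx_jH$, then $x_1 \simeq_H x_j$, so Lemma~\ref{lem: almost-breakable}\ref{lem: almost-breakable 3} gives $\{x_1x_j, x_jx_1\} = \{x_1, x_j\} \subseteq X$, whence both $x_1x_j$ and $x_jx_1$ lie in $X$. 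If $Hx_1H \subsetneq Hx_jH$, then almost-breakability gives $x_1x_j \in \{x_1, x_j\}$ or $x_jx_1 \in \{x_1, x_j\}$, and neither product can equal $x_j$ (else $Hx_jH \subseteq Hx_1H$), so $x_1x_j = x_1$ or $x_jx_1 = x_1$. A ``mixed'' behaviour is then ruled out: if $x_1x_j \ne x_1$ for some such index $j$ and $x_kx_1 \ne x_1$ for some such index $k$, the triple $x_1, x_j, x_k$ violates Lemma~\ref{lem: almost-breakable}\ref{lem: almost-breakable 4}. Hence either $x_1x_j = x_1$ for every $j$ with $Hx_1H \subsetneq Hx_jH$, or $x_jx_1 = x_1$ for every such $j$. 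Together with $x_1 \cdot 1_H = x_1 = x_1 \cdot x_1$, the first alternative yields $x_1X \subseteq X$ and the second yields $Xx_1 \subseteq X$. (If $|X| = 2$ there is nothing to prove, since then $xX = \{x\}$.)

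For part~\ref{prop: irreds and square-free i}: every $\{1_H, a\} \in \mathcal{P}_{\fin,1}(H)$ with $a \ne 1_H$ is irreducible by Proposition~\ref{prop:antichains-and-irreds}, as the singleton $\{a\}$ is vacuously a $\mid_H$-antichain contained in $H \setminus \{1_H\}$; conversely, an irreducible of $\mathcal{P}_{\fin,1}(H)$ is a non-unit-divisor, in particular it differs from $\{1_H\}$ and so has at least two elements. It remains to exclude an irreducible $X$ with $|X| \ge 3$. For such an $X$, pick $x$ as in the fact: then $\{1_H, x\}$ is a non-unit-divisor of $\mathcal{P}_{\fin,1}(H)$ that divides $X$ --- via $\{1_H, x\}X = X$ or $X\{1_H, x\} = X$ --- and is not associated to $X$, since $|\{1_H, x\}| < |X|$ and association in $\mathcal{P}_{\fin,1}(H)$ is equality (Lemma~\ref{lem: atoms-irreds}\ref{lem: atoms-irreds(iii)}); thus $\{1_H, x\}$ properly divides $X$, so $X$ is not a quark, contradicting Lemma~\ref{lem: atoms-irreds}\ref{lem: atoms-irreds(v)}. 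For part~\ref{prop: irreds and square-free ii}, I would induct on $|X|$. If $|X| \le 2$, then $X = \{1_H\}$ has the empty factorization and any $2$-element $X$ is itself irreducible, hence has a length-one factorization; both are square-free. If $|X| \ge 3$, pick $x$ as in the fact; assume $xX \subseteq X$ (the case $Xx \subseteq X$ being symmetric), so that $\{1_H, x\}(X \setminus \{x\}) = X$. By the inductive hypothesis, $X \setminus \{x\}$ has a square-free factorization $\mathfrak{b}$ into irreducibles, which by part~\ref{prop: irreds and square-free i} are $2$-element subsets of $X \setminus \{x\}$; since $x \notin X \setminus \{x\}$, none of these equals $\{1_H, x\}$, so $\{1_H, x\} \ast \mathfrak{b}$ is a factorization of $X$ into irreducibles whose letters are pairwise distinct, i.e., a square-free one.

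The step I expect to be the main obstacle is the structural fact, specifically the exclusion of ``mixed'' behaviour across different indices: almost-breakability by itself only says that $\{1_H, x_1\}$ divides $X$ on one side or the other \emph{index by index}, and it is precisely Lemma~\ref{lem: almost-breakable}\ref{lem: almost-breakable 4} that upgrades this to the single global conclusion $x_1X \subseteq X$ or $Xx_1 \subseteq X$ --- the form in which both parts of the proposition make use of it.
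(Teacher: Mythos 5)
Your proof is correct, and it takes a genuinely different, more economical route than the paper's. Both arguments start the same way: using Lemma~\ref{lem: almost-breakable}\ref{lem: almost-breakable 1}, pick $x\in X\setminus\{1_H\}$ whose principal ideal is minimal among those of the elements of $X$. But where you distill everything into the single uniform claim that $xX\subseteq X$ or $Xx\subseteq X$ --- so that $\{1_H,x\}$ splits off as a one-sided factor via $X=\{1_H,x\}(X\setminus\{x\})$ or $X=(X\setminus\{x\})\{1_H,x\}$ --- the paper partitions $X$ into the set $Y$ of associates of $x$, the set $A$ of non-associates acting as left identities on $Y\setminus\{1_H\}$, and the remainder $B$, and then runs a three-case analysis (on whether $A$ and $B$ are trivial) that produces three different decompositions of $X$ into proper subsets, invoking items \ref{lem: almost-breakable 2}--\ref{lem: almost-breakable 5} of Lemma~\ref{lem: almost-breakable} along the way. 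Your crucial step --- excluding ``mixed'' one-sided behaviour across distinct indices by applying Lemma~\ref{lem: almost-breakable}\ref{lem: almost-breakable 4} to the triple $x_1,x_j,x_k$ --- is exactly right, and it is the same lemma doing the analogous job in the paper's Case~2. What your packaging buys is that part~\ref{prop: irreds and square-free i} reduces to an immediate quark argument (via Lemma~\ref{lem: atoms-irreds}\ref{lem: atoms-irreds(iii)} and \ref{lem: atoms-irreds(v)}) and the square-freeness in part~\ref{prop: irreds and square-free ii} becomes transparent, since the peeled-off letter contains $x$ while, by Lemma~\ref{lem: atoms-irreds}\ref{lem: atoms-irreds(i)}, no letter of the inductively obtained factorization of $X\setminus\{x\}$ does; the paper instead has to verify square-freeness separately in each of its three cases. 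The only thing lost is the explicit description of the sets $A$, $B$, $Y$, which the paper's proof of part~(ii) reuses but which is not needed elsewhere.
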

\begin{proof}
\ref{prop: irreds and square-free i} We may restrict our attention on the ``only'' part of the statement, because we already know from Prop\-o\-si\-tion \ref{prop:antichains-and-irreds} that any $2$-element set in $\mathcal P_{\fin,1}(H)$ is irreducible. 

Fix $X \in \mathcal P_{\fin,1}(H)$ with $|X| \ge 3$. We need to show that $X$ is not irreducible. To begin, we gather from Lemma \ref{lem: almost-breakable}\ref{lem: almost-breakable 1} (and the fact that $X$ is finite and non-empty) that there is an element $x\in X$ such that $HxH \subseteq HyH$ for every $y \in X$. Note that $x\ne 1_H$, as we are guaranteed by Remark \ref{rem: almost-breakable}\ref{rem: almost-breakable 2} that $HyH = \allowbreak H$ if and only if $y=1_H$. 
Accordingly, we define the following subsets of $X$:
\begin{itemize}
\item $Y := \{y \in X \colon y \simeq_H x\} \cup \{1_H\}$;
\item $A := \{u \in X \colon u \not\simeq_H x \text{ and } uy = y \text{ for all } 1_H \ne y \in Y\}$;
\item $B := \{v \in X \colon v \not\simeq_H x \text{ and } vy \ne y \text{ for some }1_H\ne y\in Y\}\cup\{1_H\}$.
\end{itemize}
It is clear that $1_H\in A$ and $A,B\subseteq (X\setminus Y)\cup \{1_H\}\subsetneq X$. Moreover, 
\begin{equation}\label{eq: unions}
   X = Y \cup A \cup B, \qquad Y \cap A = Y \cap B = A \cap B = \{1_H\},
   \qquad\text{and}\qquad
   AY = A \cup Y.
\end{equation}
We will distinguish two cases, depending on whether $B = \{1_H\}$ or $B \ne \{1_H\}$.

\vskip 0.05cm

\textsc{Case 1:} $B=\{1_H\}$. If $A=\{1_H\}$, then we have from Eq.~\eqref{eq: unions} that
\begin{equation}\label{B=A=1}
X=Y=\{1_H,y_1,\dots,y_k\}
\end{equation}
with $k \ge 2$ and $y_i \simeq_H x$ for every $i \in \llb 1, k \rrb$ (note that $x = y_j$ for some $j \in \llb 1, k \rrb$).
We then get from Lemma \ref{lem: almost-breakable}\ref{lem: almost-breakable 3} that $y_i y_j \in \{y_i, y_j\}$ for every $i, j \in \llb 1, k \rrb$. Thus $X = \{1_H, y_1\}\{1_H, y_2, \ldots y_k\}$ and, by definition, it is not irreducible.
If, on the other hand, $A\ne \{1_H\}$, then Eq.~\eqref{eq: unions} yields
\begin{equation}\label{B=1 ne A}
X = AY
\end{equation}
and there exists $u \in X\setminus \{1_H\}$ with $u\not\simeq_H x$ such that $uy = y$ for all $y \in Y \setminus \{1_H\}$. This implies that $A \subsetneq X$ ($x \notin A$) and $Y \subsetneq X$ ($u \notin Y$), so again $X$ is not irreducible.

\vskip 0.05cm

\textsc{Case 2:} $B\ne\{1_H\}$. There exist $b \in B \setminus\{1_H\}$ and $y \in Y \setminus \{1_H\}$ such that $by \ne y$. We claim that
\begin{equation}\label{B ne 1}
X=\{1_H,y\}(A\cup B\cup Y'),
\end{equation}
where $Y':=Y\setminus\{y\}$. Let $z \in X\setminus\{1_H\}$. Then either $z \simeq_H x$ (and in this case $z \in Y$) or $z \not\simeq_H x$ (and in this case either $z \in A$ or $z \in B$). It then follows that $X \subseteq \{1_H, y\}(A \cup B\cup Y \setminus \{y\})$. As for the reverse inclusion, it is enough to prove that each of $yA$, $yB$, and $yY'$ is a subset of $X$. Let $u\in A$ and assume for a contradiction that $yu \notin X$. Then $yu \ne y \ne by$, which contradicts Lemma \ref{lem: almost-breakable}\ref{lem: almost-breakable 4}. Thus, $yA \subseteq X$. Now let $v \in B$. We claim that $yv = y$. If $v = 1_H$, there is nothing to prove. Otherwise, we have from the definition itself of $B$ that $v \not\simeq_H x$ and $vy' \ne y'$ for some $y' \in Y \setminus\{1_H\}$. It then follows by Lemma \ref{lem: almost-breakable}\ref{lem: almost-breakable 5} that $yv = y$ and hence $yB \subseteq X$. Lastly, by Lemma \ref{lem: almost-breakable}\ref{lem: almost-breakable 3}, $yz\in \{y, z\}$ for every $z \in Y$, so that $yY'\subseteq Y$. Note that the set $\{1_H\}\ne A \cup B\cup Y'$ in \eqref{B ne 1} is a proper subset of $X$ (because $y \notin A\cup B \cup Y'$), so $X$ is not irreducible.

\vskip 0.05cm

\ref{prop: irreds and square-free ii} Pick a set $X \in \mathcal P_{\fin,1}(H)$ and set $n := |X|-1$. If $n=0$ or $n=1$ the conclusion is trivial. So, suppose $n\ge 2$ and assume inductively that every $Z\in \mathcal P_{\fin,1}(H)$ with $|Z|\le n$ admits a square-free factorization (into irreducibles). Let $x$ be a $\mid_H$-maximal element in $X$ and decompose $X$ as done in \ref{prop: irreds and square-free i}. If $B=A=\{1_H\}$, then \eqref{B=A=1} holds and $X=Y=\{1_H,y_1,\dots,y_k\}$ with $k\ge 2$ and $y_i\simeq_H x$ for every $i$. In light of Lemma \ref{lem: almost-breakable}\ref{lem: almost-breakable 3}, we then get that $X=\{1_H,y_1\}\cdots\{1_H, y_k\}$ is a square-free factorization of $X$. If $B=\{1_H\}\ne A$, then \eqref{B=1 ne A} holds and $X=AY$ with $\{1_H\}\ne A,Y\subsetneq X$. By the inductive hypothesis, each of $A$ and $Y$ admits a square-free factorization and, since $A \cap Y=\{1_H\}$ and the factors of a set $Z\in \mathcal P_{\fin,1}(H)$ are all subsets of $Z$, we can conclude that $X$ itself admits a square-free factorization. Lastly, if $B\ne\{1_H\}$, then \eqref{B ne 1} holds and $X=\{1_H,y\}(A\cup B\cup Y')$, where $Y':=Y\setminus\{y\}$ and $y\in X$ is such that $y\simeq_H x$. Since $y\notin A\cup B\cup Y'$, we get again by the inductive hypothesis that the set $A\cup B\cup Y'$ (and hence $X$) admits a square-free factorization.
\end{proof}

\begin{corollary}\label{cor: square-free}
    If the reduced power monoid of an almost-breakable monoid is UmF, then the minimal factorizations of its elements are square-free.
\end{corollary}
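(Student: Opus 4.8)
The plan is to derive the corollary from Proposition~\ref{prop: irreds and square-free}\ref{prop: irreds and square-free ii} and the UmF hypothesis, using throughout the basic description in Remark~\ref{rem:minimal-factors-power monoids}: since associated elements of $\mathcal{P}_{\fin,1}(H)$ are equal (Lemma~\ref{lem: atoms-irreds}\ref{lem: atoms-irreds(iii)}), one $\mathcal{P}_{\fin,1}(H)$-word precedes another in the preorder $\sqeq$ precisely when the multiset of letters of the former is a sub-multiset of that of the latter, and two such words are equivalent precisely when each is a permutation of the other. In particular, any factorization lying $\sqeq$-below a square-free one is again square-free, and any permutation of a square-free factorization is square-free.

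Concretely, I would argue as follows. Let $H$ be almost-breakable with $\mathcal{P}_{\fin,1}(H)$ UmF, and fix $X \in \mathcal{P}_{\fin,1}(H)$. By Proposition~\ref{prop: irreds and square-free}\ref{prop: irreds and square-free ii}, $X$ admits a square-free factorization $\mathfrak{c}$ into irreducibles. Among all factorizations of $X$ that are $\sqeq \mathfrak{c}$ --- a non-empty family (it contains $\mathfrak{c}$) of words of length at most the length of $\mathfrak{c}$ --- choose one, $\mathfrak{m}$, of minimal length. This $\mathfrak{m}$ is then a \emph{minimal} factorization of $X$: if some factorization $\mathfrak{e}$ of $X$ had $\mathfrak{e} \sqeq \mathfrak{m}$ but $\mathfrak{m} \not\sqeq \mathfrak{e}$, then transitivity of $\sqeq$ would give $\mathfrak{e} \sqeq \mathfrak{c}$, while the two relations between $\mathfrak{e}$ and $\mathfrak{m}$ would make the letter-multiset of $\mathfrak{e}$ a \emph{proper} sub-multiset of that of $\mathfrak{m}$, hence $\mathfrak{e}$ strictly shorter than $\mathfrak{m}$ --- contradicting the choice of $\mathfrak{m}$. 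Since $\mathfrak{m} \sqeq \mathfrak{c}$ and $\mathfrak{c}$ is square-free, $\mathfrak{m}$ is square-free. Finally, by the UmF hypothesis every minimal factorization of $X$ is equivalent to $\mathfrak{m}$, hence a permutation of $\mathfrak{m}$, hence square-free; as $X$ was arbitrary, the corollary follows.

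I do not expect a genuine obstacle: all the real content sits in Proposition~\ref{prop: irreds and square-free} and in Remark~\ref{rem:minimal-factors-power monoids}. The one point that genuinely needs care --- and the reason I keep everything at the level of the preorder $\sqeq$ and of letter-multisets, rather than trying to ``erase a repeated letter'' directly from a minimal factorization --- is that $H$, and hence $\mathcal{P}_{\fin,1}(H)$, need not be commutative (an idempotent monoid such as a left-zero semigroup with an identity adjoined is even breakable, yet far from commutative), so one may not reorder factors inside a product at will. Passing through the $\sqeq$-preorder avoids this entirely, and the remaining checks --- that a shortest factorization below a given one is $\sqeq$-minimal, and that square-freeness descends along $\sqeq$ --- are routine.
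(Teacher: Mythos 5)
Your proof is correct and follows essentially the same route as the paper's: obtain a square-free factorization from Proposition~\ref{prop: irreds and square-free}\ref{prop: irreds and square-free ii}, descend to a minimal factorization below it (which is then a subword, hence still square-free), and invoke UmF-ness together with Remark~\ref{rem:minimal-factors-power monoids} to conclude that every minimal factorization is a permutation of this one. The only difference is that you spell out, via a shortest-length choice, why a minimal factorization exists below the square-free one, a point the paper leaves implicit.
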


\begin{proof}
    Let $H$ be an almost-breakable monoid. If $\mathcal P_{\fin,1}(H)$ is UmF, then the minimal factorizations of a set  $X \in \mathcal P_{\fin,1}(H)$ have all the same factors (by Remark \ref{rem:minimal-factors-power monoids}). So, it remains to see that $X$ has a square-free minimal fac\-tor\-i\-za\-tion. For, we know from Proposition \ref{prop: irreds and square-free}\ref{prop: irreds and square-free ii} that $X$ admits a square-free factorization $\mathfrak a$. If $\mathfrak a$ is minimal, then we are done. Otherwise, there is a subword $\mathfrak b$ of $\mathfrak a$ which is a minimal factorization of $X$, and it is obvious that a subword of a square-free word is itself square-free.
\end{proof}

We introduce the following definitions to streamline the sequel of the presentation.
\begin{definition}\label{dfn: twisted and bridged}
We let a \evid{balanced pair} of a monoid $H$ be an ordered pair $(x,y) \in H \times H$ such that $xy\in \{x, y\}$; otherwise, the pair is \evid{unbalanced}. Accordingly, the monoid $H$ is \evid{twisted} if there exist unbalanced pairs $(x,y)$ and $(z,w)$ of $H$ with $\{x,y\}\cap\{z,w\}=\emptyset$ such that $xy \in \{z,w\}$ and $zw\in\{x,y\}$; and \evid{bridged} if there exist unbalanced pairs $(x_1,x_2)$, $(x_2,x_3)$, and $(x_1,x_3)$ of $H$ such that $x_1x_3 \notin \{x_1 x_2, x_2 x_3\}$.
If $H$ is not twisted (resp., bridged), then we call it \evid{untwisted} (resp., \evid{unbridged}).
\end{definition}

Our interest in these concepts arises from the next proposition, in which we show that the ground monoid being almost-breakable is not \textit{sufficient} for a reduced power monoid to be UmF. More precisely, we are going to see that, if $H$ is an almost-breakable monoid, then a \textit{necessary} condition for $\mathcal{P}_{\fin,1}(H)$ to be UmF is that $H$ is also untwisted {\it and} unbridged. To this end, we will often use without further mention that, if $H$ is almost-breakable and $(x,y)$ is an unbalanced pair of $H$, then the pair $(y,x)$ is balanced.

\begin{proposition}\label{prop: P}
    Let $H$ be an almost-breakable monoid. If $H$ is either twisted or bridged, then the reduced power monoid of $H$ is not UmF.
\end{proposition}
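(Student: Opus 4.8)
The plan is to prove the contrapositive: assuming $\mathcal{P}_{\fin,1}(H)$ is UmF, I will show that $H$ is both untwisted and unbridged by exhibiting, in each of the two forbidden configurations, a single set in $\mathcal{P}_{\fin,1}(H)$ carrying two inequivalent minimal factorizations. Two preliminary reductions help. First, since $H$ is idempotent (Remark~\ref{rem: almost-breakable}\ref{rem: almost-breakable 2}), any finite subset of $H$ generates a finite band; so, replacing $H$ by the submonoid $H_0$ generated by the finitely many elements involved in a twisted (resp.\ bridged) configuration, I may assume $H$ is finite: $H_0$ inherits the configuration, and for $Y\in\mathcal{P}_{\fin,1}(H_0)$ the factors of $Y$ in $\mathcal{P}_{\fin,1}(H)$ are subsets of $Y$ (Lemma~\ref{lem: atoms-irreds}\ref{lem: atoms-irreds(i)}) while the irreducibles of both $\mathcal{P}_{\fin,1}(H_0)$ and $\mathcal{P}_{\fin,1}(H)$ are the $2$-element subsets containing $1_H$ (Proposition~\ref{prop: irreds and square-free}\ref{prop: irreds and square-free i}), so the minimal factorizations of $Y$ agree in the two monoids and a UmF-failure over $H_0$ is one over $H$. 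Second, by Corollary~\ref{cor: square-free} every set has a square-free minimal factorization, which under the hypothesis is its only one up to the order of the factors; so it suffices to produce two square-free minimal factorizations of one set that use different $2$-element subsets of $H$.

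The structural engine is Lemma~\ref{lem: almost-breakable}. Its parts \ref{lem: almost-breakable 1}--\ref{lem: almost-breakable 5} yield: the principal ideals of $H$ form a chain; an unbalanced pair $(a,b)$ has $a\not\simeq_H b$, $ba$ equal to whichever of $a,b$ has the smaller principal ideal, and $ab\simeq_H ba$ with $ab\notin\{a,b\}$; and any product of two elements lies in the $\simeq_H$-class of its $\simeq_H$-smaller factor, at least one of $ab,ba$ being equal to that smaller factor. Given a twisted configuration $(x,y),(z,w)$, set $m_1:=yx$ and $m_2:=wz$. From $xy\in\{z,w\}$, $zw\in\{x,y\}$, $xy\simeq_H m_1$, $zw\simeq_H m_2$ and the chain condition one deduces $m_1\simeq_H m_2$, $m_1\ne m_2$, $xy=m_2$ and $zw=m_1$. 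What is left undetermined — which of $x,y,z,w$ has which principal ideal, and the values of the cross-products $xz,yw,\dots$ — is handled by a short case split, several branches of which are dual to others via the opposite monoid of $H$ (still almost-breakable). In each branch one lets $X$ be $\{1_H,x,y,z,w\}$, enlarged if necessary by the finitely many further elements the cross-products introduce, and checks that a word "going around the $4$-cycle from $(x,y)$" and one "going around it from $(z,w)$" — roughly of the forms $\{1_H,x\}\ast\{1_H,y\}\ast\{1_H,w\}$ and $\{1_H,z\}\ast\{1_H,w\}\ast\{1_H,y\}$ — are both factorizations of $X$. Each is minimal because, by Remark~\ref{rem:minimal-factors-power monoids}, a minimal factorization of $X$ has length $\le|X|-1$ and the (short) proper sub-words of these words are too small to hit $X$ (note $|X|\ge 5$). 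Since the two words involve $\{1_H,y\}$ and $\{1_H,w\}$ with $y\ne w$, they are inequivalent, contradicting the UmF hypothesis.

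The bridged case proceeds in the same spirit, with the three unbalanced pairs $(x_1,x_2),(x_2,x_3),(x_1,x_3)$ and the hypothesis $x_1x_3\notin\{x_1x_2,x_2x_3\}$: the chain of principal ideals of $x_1,x_2,x_3$ distinguishes the $\simeq_H$-minimal element among them, and in each of the resulting branches (again halved by opposite-monoid duality) one builds a suitable $X$ inside the finite submonoid $\langle x_1,x_2,x_3\rangle\cup\{1_H\}$ admitting a minimal factorization that "routes through $x_2$" (via the products $x_1x_2$ and $x_2x_3$) and another using the "bridge" product $x_1x_3$ directly; the two routes involve different $2$-element subsets of $H$, hence are inequivalent. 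I expect the main obstacle, in both cases, to be the bookkeeping of the cross-products: the definitions of "twisted" and "bridged" pin down only $xy,zw$ (resp.\ $x_ix_j$ for $i<j$) up to the constraints of Lemma~\ref{lem: almost-breakable}, so whether a given candidate word actually equals the candidate set $X$ depends on the chain order of the ideals involved and forces the case analysis; choosing $X$ and the two candidate words so that every entry of the expanded products provably lands in $X$, while using the $H\leftrightarrow H^{\mathrm{op}}$ symmetry to limit the number of branches, is where the real work lies, after which minimality and inequivalence are routine.
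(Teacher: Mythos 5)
Your twisted-case strategy coincides with the paper's (the set $\{1_H,x,y,z,w\}$, two words of length three, minimality from the cardinality bound $|UV|\le 4<5=|X|$, inequivalence because the letter multisets differ), and your structural deductions ($xy=wz$, $zw=yx$, $m_1\simeq_H m_2$, $m_1\ne m_2$) are correct and do organize the case split. But there is a genuine gap exactly where you locate ``the real work'': you never verify that your two candidate words are factorizations of one and the same set. The hedge ``enlarged if necessary by the finitely many further elements the cross-products introduce'' does not repair this: once $X$ is enlarged, \emph{both} length-three words must still multiply out to exactly the enlarged $X$, and nothing in the proposal guarantees that the extra elements produced by one word are also produced by the other (nor that the two words can be chosen compatibly across the branches). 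The paper's proof shows, by pinning down via almost-breakability which of the pairs $(z,x),(x,z),(z,y),(y,z),\dots$ are balanced in each of its two cases, that no enlargement occurs --- every product collapses back into $\{1_H,x,y,z,w\}$ --- and that verification is the substantive content of the argument. Without it the twisted case remains a plan. (The reduction to finite $H$ via the finiteness of finitely generated bands is harmless but unnecessary.)

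The bridged case is where you genuinely diverge from the paper, and your route is not shown to be viable. You commit to producing \emph{two square-free} minimal factorizations of a single set with different letters. The paper does something different and, in fact, easier: it exhibits the set $Y=\{1_H,x_1,x_2,x_3,x_1x_2,x_2x_3\}$, proves $|Y|=6$ using Lemma \ref{lem: almost-breakable}, and shows that the \emph{non}-square-free word $\{1_H,x_2\}\ast\{1_H,x_3\}\ast\{1_H,x_1\}\ast\{1_H,x_2\}$ is a minimal factorization of $Y$ (by checking it is a factorization and eliminating all eligible square-free subwords), which contradicts Corollary \ref{cor: square-free} under the UmF hypothesis. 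Your own preliminary reduction already licenses this shortcut --- under UmF every minimal factorization must be square-free, so a single non-square-free minimal factorization suffices --- yet you restrict yourself to the narrower goal of two square-free witnesses. That restriction may not be satisfiable for the natural candidate set $Y$ (it is not clear $Y$ has two inequivalent square-free minimal factorizations), so your plan could force a search for a different set with no guarantee of success. To close the proof: carry out the balanced-pair bookkeeping showing both words evaluate to $\{1_H,x,y,z,w\}$ in the twisted case, and replace the bridged-case plan with the Corollary \ref{cor: square-free} argument.
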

\begin{proof}
Assume first that $H$ is twisted. Accordingly, let $(x,y)$ and $(z,w)$ be two unbalanced pairs of $H$ with $\{x,y\}\cap\{z,w\}=\emptyset$ such that $xy \in \{z,w\}$ and $zw\in\{x,y\}$. We claim that the set $X:=\{1_H,x,y,z,w\}$ has two inequivalent minimal factorizations (and hence $\mathcal{P}_{\fin,1}(H)$ is not UmF). To this end, we distinguish two cases that cover all possible configurations. 
    
    {\sc Case 1:} $xy=z$ and $zw=x$.
     Under these assumptions, $(z,x)$, $(x,z)$, $(z,y)$, $(y,z)$, $(x,w)$, $(w,x)$, $(y,x)$, $(w,z)$ are balanced pairs. Moreover, by the hypothesis of almost-breakability, either $(y,w)$ or $(w,y)$ is balanced. Say $yw\in \{y,w\}$ (the other case is similar). Then,
    $\{1,x\}\ast\{1,y\}\ast\{1,w\}$ and $\{1,y\}\ast\{1,z\}\ast\{1,w\}$ are inequivalent minimal factorizations of $X$. The minimality follows from a purely combinatorial argument, since the product of two $2$-element sets contains at most $4$ elements while $|X|=5$.
    
    {\sc Case 2:} $xy=z$ and $zw=y$.
    Under these assumptions, $(z,x)$, $(x,z)$, $(z,y)$, $(y,z)$, $(y,w)$, $(w,y)$, $(y,x)$, $(w,z)$ are balanced pairs. Moreover, by almost-breakability, either $(x,w)$ or $(w,x)$ is also balanced. Say $wx\in \{x,w\}$ (the other case is analogous). Then, $\{1,x\}\ast\{1,z\}\ast\{1,w\}$ and $\{1,w\}\ast\{1,x\}\ast\{1,y\}$ are inequivalent minimal factorizations of $X$.

  \vskip 0.05cm

  Assume now that $H$ is bridged. Let $x_1,x_2,x_3\in H$ be such such that $(x_1,x_2)$, $(x_2,x_3)$, and $(x_1,x_3)$ are unbalanced pairs and $x_1 x_2 \ne x_1x_3 \ne x_2 x_3$. We claim that 
  \[\mathfrak a:=\{1_H, x_2\}\ast\{1_H,x_3\}\ast\{1_H,x_1\}\ast\{1_H,x_2\}\] 
  is a minimal factorization of the set 
  \[Y:=\{1_H, x_1,\allowbreak x_2, \allowbreak x_3,\allowbreak x_1 x_2, x_2x_3\}.\] 
  By Corollary \ref{cor: square-free}, this would imply that $\mathcal{P}_{\fin,1}(H)$ is not UmF. In light of items \ref{lem: almost-breakable 1} and \ref{lem: almost-breakable 3} of Lemma \ref{lem: almost-breakable}, the two-sided ideals $Hx_iH$ are ordered with respect to the strict inclusion $\subsetneq$. Thus, by Lemma \ref{lem: almost-breakable}\ref{lem: almost-breakable 2}, $x_{2}x_3\ne x_1$ and $x_{1} x_{2}\ne x_{3}$. Moreover, $x_1 x_2\ne x_2 x_3$. In fact, by Lemma \ref{lem: almost-breakable}\ref{lem: almost-breakable 4}, if $x_1x_2=x_2x_3$, then it must be either $H x_1 H\subsetneq H x_2 H$ or $H x_3 H\subsetneq H x_2 H$. If, e.g., $H x_1 H\subsetneq H x_2 H$ (the other case works analogously), then $x_1x_2\simeq_H x_1$ but $x_2x_3=x_1x_2$ is associated to $x_2$ or $x_3$, a contradiction. It then follows that $|Y|=6$. We first observe that 
  \[Y=\{1_H, x_2\}\{1_H,x_3\}\{1_H,x_1\}\{1_H,x_2\},\] 
  i.e., $\mathfrak a$ is a factorization of $Y$. In fact, since $H$ is almost-breakable, then $(x_2, x_1), (x_3,x_2),(x_3,x_1)$ are balanced pairs and, as a consequence, $x_2x_1x_2\in \{x_1x_2,x_2\}$, $x_2x_3x_2\in \{x_2x_3,x_2\}$, $x_2x_3x_1\in\{x_1,x_2,x_2x_3\}$, $x_3x_1x_2\in\{x_2,x_3, x_1x_2\}$, and $x_2x_3x_1x_2\in\{x_2,x_1x_2,x_2x_3\}$. It remains to prove that $\mathfrak a$ is minimal. Note that $\{1_H, x_1\},\{1_H,x_2\},\{1_H,x_3\}$ must appear in all possible factorizations of $Y$ since it is impossible to recover one of the $x_i$'s as a product of the other two. Moreover, the factor $\{1_H,x_3\}$ must always precede $\{1_H,x_1\}$, otherwise the element $x_1x_3\notin Y$ would appear in the product. Therefore, the only subwords of $\mathfrak a$ that can be eligible as minimal factorizations of $Y$ are 
  \[\mathfrak b:=\{1_H, x_2\}\ast\{1_H,x_3\}\ast\{1_H,x_1\}, \quad \mathfrak c:= \allowbreak \{1_H, \allowbreak x_3\}\ast\{1_H, x_2\}\ast\{1_H,x_1\},\]
  \[\text{and } \mathfrak d:=\{1_H,x_3\}\ast\{1_H,x_1\}\ast\{1_H, x_2\}.\] 
  However, it is immediate to verify by a direct computation that $\pi(\mathfrak b)$ and $\pi(\mathfrak c)$ do not contain $x_1x_2$, while $\pi(\mathfrak d)$ does not contain $x_2x_3$. This shows that $\mathfrak a$ is minimal and concludes the proof.
\end{proof}

The examples below show that, for an almost-breakable monoid, twisted-ness and bridged-ness are independent properties, which is not immediately clear from the definitions.

\begin{examples}
\begin{enumerate*}[label=\textup{(\arabic{*})}, mode=unboxed]
\item Let $H_1$ be the unitization of the $4$-element magma defined by the following table:
$$
\setlength{\extrarowheight}{3pt}
\begin{array}{l|*{4}{l}}
   & x_1  & x_2 & x_3 & x_4\\
\hline
x_1  & x_1  & x_1 & x_1 & x_1 \\
x_2 & x_1  & x_2 & x_1 & x_2 \\
x_3  & x_3  & x_3 & x_3 & x_3 \\
x_4 & x_3 & x_4 & x_3 & x_4
\end{array}
\;\;.
$$
It is readily verified that $H_1$ is an almost-breakable monoid. In addition, $H_1$ is twisted and unbridged, because $(x_2,x_3)$ and $(x_4,x_1)$ are the only unbalanced pairs of $H_1$ and we have $x_2x_3=x_1$ and $x_4x_1=x_3$.
\end{enumerate*}

\vskip 0.05cm

\begin{enumerate*}[label=\textup{(\arabic{*})}, mode=unboxed, resume]
\item Let $H_2$ be the unitization of the $6$-element magma described by the following table:
$$
\setlength{\extrarowheight}{3pt}
\begin{array}{l|*{6}{l}}
   & x_1  & x_2 & x_3 & x_4 & x_5 & x_6\\
\hline
x_1  & x_1  & x_1 & x_1 & x_1 & x_1 & x_1\\
x_2 & x_1  & x_2 & x_1 & x_1 & x_2 & x_6 \\
x_3  & x_3  & x_3 & x_3 & x_3 & x_3 & x_3\\
x_4 & x_4 & x_4 & x_4 & x_4 & x_4 & x_4\\
x_5 & x_1 & x_2 & x_3 & x_3 & x_5 & x_6\\
x_6 & x_1 & x_2 & x_1 & x_1 & x_2 & x_6
\end{array}
\;\;.
$$
It is tedious but easy to check that $H_2$ is an untwisted almost-breakable monoid. Moreover, $H_2$ is bridged, since $(x_6, x_5), (x_5, x_4), (x_6,x_3)$ are unbalanced pairs with $x_6x_3=x_1\notin \{x_6x_5, x_5x_4\}=\{x_2,x_3\}$.
\end{enumerate*}
\end{examples}

The next proposition shows that, while the monoid $H$ being almost-breakable is not a sufficient condition for $\mathcal{P}_{\fin,1}(H)$ to be UmF, $H$ being breakable is.

\begin{proposition}\label{prop: breakable}
The reduced power monoid of a breakable monoid is UmF.
\end{proposition}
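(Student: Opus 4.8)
The plan is to show that, when $H$ is breakable, setwise multiplication in $\mathcal P_{\fin,1}(H)$ collapses to union, which makes the whole arithmetic transparent. Note first that breakable monoids are almost-breakable, so this is genuinely the ``base case'' of Theorem~\ref{thm:UmFness} (that theorem reduces the question to $K=(H\setminus H^\times)\cup\{1_H\}$, but here $H$ is reduced by Remark~\ref{rem: almost-breakable}\ref{rem: almost-breakable 2}, so $K=H$ and the reduction is vacuous), hence a direct argument is needed. Since $H$ is breakable it is idempotent, hence reduced and Dedekind-finite (Remark~\ref{rem: almost-breakable}\ref{rem: almost-breakable 2}), so $\mathcal P_{\fin,1}(H)$ is a legitimate reduced monoid. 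The key observation is: for $X,Y\in\mathcal P_{\fin,1}(H)$ one has $xy\in\{x,y\}\subseteq X\cup Y$ for all $x\in X$, $y\in Y$, while $X=X\{1_H\}\subseteq XY$ and $Y=\{1_H\}Y\subseteq XY$; thus $XY=X\cup Y$, and by an immediate induction $A_1\cdots A_n=A_1\cup\cdots\cup A_n$ for all $A_1,\dots,A_n\in\mathcal P_{\fin,1}(H)$. In particular such products are order-independent.

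Next I would pin down the factorizations. Since $H$ is almost-breakable, Proposition~\ref{prop: irreds and square-free}\ref{prop: irreds and square-free i} gives that the irreducibles of $\mathcal P_{\fin,1}(H)$ are exactly the sets $\{1_H,x\}$ with $x\in H\setminus\{1_H\}$. Fix $X\in\mathcal P_{\fin,1}(H)$. If $X=\{1_H\}$ its only factorization is the empty word (Remark~\ref{rem:minimal-factors}) and there is nothing to prove, so write $X=\{1_H\}\cup\{x_1,\dots,x_m\}$ with the $x_i$ pairwise distinct and $\ne 1_H$, and $m\ge 1$. Combining the previous paragraph with the description of the irreducibles, a factorization of $X$ into irreducibles is precisely a word $\{1_H,y_1\}\ast\cdots\ast\{1_H,y_n\}$ with each $y_j\in H\setminus\{1_H\}$ and $\{y_1,\dots,y_n\}=\{x_1,\dots,x_m\}$ as sets (because the product equals $\{1_H\}\cup\{y_1,\dots,y_n\}$). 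Such words exist — e.g.\ $\{1_H,x_1\}\ast\cdots\ast\{1_H,x_m\}$ — so $\mathcal P_{\fin,1}(H)$ is factorable.

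Finally I would characterize minimal factorizations straight from the definition of $\sqeq_{\mathcal P_{\fin,1}(H)}$: since associatedness in $\mathcal P_{\fin,1}(H)$ is equality (Lemma~\ref{lem: atoms-irreds}\ref{lem: atoms-irreds(iii)}), a factorization $\mathfrak a$ of $X$ is minimal iff no factorization of $X$ has a letter-multiset strictly contained in that of $\mathfrak a$. By order-independence, a word over the irreducibles is a factorization of $X$ iff the union of its letters is $X$; and since each $x_i$ lies in no irreducible other than $\{1_H,x_i\}$, the underlying set of the letter-multiset of any factorization of $X$ must contain $\{\{1_H,x_1\},\dots,\{1_H,x_m\}\}$. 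Hence the $\subseteq$-minimal letter-multisets of factorizations of $X$ are exactly the repetition-free set $\{\{1_H,x_1\},\dots,\{1_H,x_m\}\}$, so the minimal factorizations of $X$ are precisely the $m!$ words $\{1_H,x_{\sigma(1)}\}\ast\cdots\ast\{1_H,x_{\sigma(m)}\}$ with $\sigma$ a permutation of $\llb 1,m\rrb$. These are permutations of one another, hence equivalent by Remark~\ref{rem:minimal-factors-power monoids}, so any two minimal factorizations of $X$ are equivalent; together with factorability, this proves that $\mathcal P_{\fin,1}(H)$ is UmF. There is no real obstacle here — the construction of the proof is essentially ``multiplication $=$ union'' plus bookkeeping — and the only step that requires a little care is translating ``no proper subword is a factorization'' into the statement about letter-multisets, where order-independence of products is doing all the work.
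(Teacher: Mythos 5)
Your proof is correct, and it arrives at the same endpoint as the paper's argument --- the minimal factorizations of $X=\{1_H,x_1,\dots,x_m\}$ are precisely the $m!$ orderings of $\{1_H,x_1\}\ast\cdots\ast\{1_H,x_m\}$ --- but it is organized around a different key lemma. The paper never isolates the identity $XY=X\cup Y$: it takes factorability from Proposition~\ref{prop:2.8}, pins down the shape of a minimal factorization via Proposition~\ref{prop: irreds and square-free}, and then runs two separate element-level contradictions (a missing index $i$ would force $x_i=x_{j_1}\cdots x_{j_r}\in\{x_{j_1},\dots,x_{j_r}\}$, impossible by breakability; a repeated letter could be deleted without changing the product, contradicting minimality). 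Your single observation that setwise multiplication degenerates to union --- so that $\mathcal P_{\fin,1}(H)$ is nothing but the semilattice of finite subsets of $H$ containing $1_H$ under $\cup$ --- subsumes both of those steps and yields factorability for free by exhibiting an explicit factorization, at the mild cost of having to spell out the translation of minimality into strict containment of letter-multisets; you do this correctly, since associatedness in $\mathcal P_{\fin,1}(H)$ is equality by Lemma~\ref{lem: atoms-irreds}\ref{lem: atoms-irreds(iii)}, and the observation that $\{1_H,x_i\}$ is the only irreducible containing $x_i$ closes the argument. Both proofs rely on Proposition~\ref{prop: irreds and square-free}\ref{prop: irreds and square-free i} for the description of the irreducibles, so neither is self-contained in that respect; what your version buys is a cleaner structural explanation of \emph{why} the breakable case is transparent, while the paper's version stays closer to the local style of argument used elsewhere in Section~\ref{sec: UmFness and almost-breakable}.
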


\begin{proof}
Let $H$ be a breakable monoid and fix $X \in \mathcal P_{\fin,1}(H)$. We get from Proposition \ref{prop:2.8}\ref{prop:facts i} that $\mathcal{P}_{\fin,1}(H)$ is FmF (and hence factorable), so it only remains to check that $X$ has an essentially unique minimal fac\-tor\-i\-za\-tion (into irreducibles). For, assume $k := |X| - 1 \ne 0$ and write $X = \{1_H, x_1, \ldots, x_k\}$, where the $x_i$'s are non-identity elements of $H$ (if $k = 0$, then $X = \{1_H\}$ and we are done, see Remark \ref{rem:minimal-factors}). 

If $A$ is an irreducible divisor of $X$, then it follows from Proposition \ref{prop: irreds and square-free}\ref{prop: irreds and square-free ii} and Lemma \ref{lem: atoms-irreds}\ref{lem: atoms-irreds(ii)} that $A = \allowbreak \{1_H, x_{i}\}$ for some $i\in \llb 1, k \rrb$. So, we gather from Remark \ref{rem:minimal-factors-power monoids} that a minimal factorization of $X$ must be a $\mathcal P_{\fin,1}(H)$-word of the form $\mathfrak a = \{1_H, x_{i_1}\} \ast \cdots \ast \{1_H, x_{i_n}\}$ with $n \in \llb 1, k \rrb$ and $i_1, \ldots, i_n \in \llb 1, k \rrb$. To conclude, it is therefore enough to check that $n$ equals $k$ and the $i_j$'s are pairwise distinct. 

To begin, denote by $\pi$ the factorization homomorphism of $\mathcal P_{\fin,1}(H)$ and suppose to the contrary that $n < k$. There then exists an index $i \in \llb 1, k \rrb$ such that $i \notin I := \{i_1, \ldots, i_n\}$. Since $x_i \in X \setminus \{1_H\}$ and $X = \pi(\mathfrak a)$, we thus find that $x_i = x_{j_1} \cdots x_{j_r}$ for some $r \in \llb 1, n \rrb$ and $j_1, \ldots, j_r \in I$. But this is impossible, because $H$ being breakable implies $x_{j_1} \cdots x_{j_r} \in \{x_{j_1}, \ldots, x_{j_r}\} \subseteq X \setminus \{x_i\}$. 

Next, assume for a contradiction that there exist $s, t \in \llb 1,n\rrb$ with $s < t$ such that $x_{i_s}=x_{i_t}$. Accordingly,  
set $A := \pi(\mathfrak z_1)$ and $B := \pi(\mathfrak z_2)$, where the $\mathcal P_{\fin,1}(H)$-words $\mathfrak z_1$ and $\mathfrak z_2$ are defined by
$$
\mathfrak{z}_1 := 
\left\{
\begin{array}{ll}
\{1_H, x_{i_1}\} \ast \cdots \ast \{1_H, x_{i_{s-1}}\} & \text{if }s \ne 1, \\
1_{\mathscr F(H)} & \text{if } s = 1,
\end{array}
\right.
\qquad\text{and}\qquad
\mathfrak{z}_2 :=
\{1_H, x_{i_{s+1}}\}\ast \allowbreak \cdots \ast  \{1_H, x_{i_{n}}\}.
$$
Then $\pi(\mathfrak a)=A\{1,x_s\}B=AB\cup A x_s B=AB=\pi(\mathfrak{z}_1\ast\mathfrak{z}_2)$ and this contradicts the minimality of $\mathfrak a$.
\end{proof}

We finish with a characterization of UmF-ness in the reduced power monoid of a commutative monoid.

\begin{theorem}\label{thm: H commutative}
   The following are equivalent for a commutative monoid $H$:
    \begin{enumerate}[label=\textup{(\alph{*})}]
    \item\label{thm:H commutative(i)} $\mathcal{P}_{\fin,1}(H)$ is UmF. 
    \item\label{thm:H commutative(ii)} $H \setminus H^\times$ is a breakable subsemigroup of $H$, the group of units of $H$ has order $\le 2$, and $H$ is the trivial ideal extension of $H \setminus H^\times$ by $H^\times$.
    \end{enumerate}
\end{theorem}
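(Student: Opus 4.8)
The plan is to derive the result almost entirely from Theorem~\ref{thm:UmFness} together with Proposition~\ref{prop: breakable}, using the elementary observation that, \emph{for a commutative semigroup}, being almost-breakable is the same as being breakable: if $xy = yx$ for all $x, y$, then the disjunction ``$xy \in \{x,y\}$ or $yx \in \{x,y\}$'' in Definition~\ref{dfn: almost-brakable} collapses to ``$xy \in \{x,y\}$''. Hence the clauses ``$H \setminus H^\times$ is an almost-breakable subsemigroup of $H$'' and ``$H \setminus H^\times$ is a breakable subsemigroup of $H$'' are equivalent under commutativity, and the only substantive point is to dispose of the fourth clause of Theorem~\ref{thm:UmFness}\ref{thm:UmFness(ii)}, namely the UmF-ness of $\mathcal{P}_{\fin,1}(K)$ with $K := (H \setminus H^\times) \cup \{1_H\}$.

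For the implication \ref{thm:H commutative(i)} $\Rightarrow$ \ref{thm:H commutative(ii)}, I would assume $\mathcal{P}_{\fin,1}(H)$ is UmF and apply Theorem~\ref{thm:UmFness}, which yields that $H \setminus H^\times$ is an almost-breakable subsemigroup of $H$, that $|H^\times| \le 2$, and that $H = H^\times \oslash (H \setminus H^\times)$. Since $H$ is commutative, $H \setminus H^\times$ is a commutative almost-breakable semigroup, hence breakable by the observation above; together with the other two conclusions, this is exactly the content of \ref{thm:H commutative(ii)}.

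For the converse \ref{thm:H commutative(ii)} $\Rightarrow$ \ref{thm:H commutative(i)}, I would verify the four clauses of Theorem~\ref{thm:UmFness}\ref{thm:UmFness(ii)}. Breakability implies almost-breakability (Remark~\ref{rem: almost-breakable}\ref{rem: almost-breakable 1}), so $H \setminus H^\times$ is an almost-breakable subsemigroup of $H$; the conditions $|H^\times| \le 2$ and $H = H^\times \oslash (H \setminus H^\times)$ are assumed. For the fourth clause, observe that $K := (H \setminus H^\times) \cup \{1_H\}$ is a submonoid of $H$ (it contains $1_H$, and it is closed under the operation because the product of two non-units lies in the subsemigroup $H \setminus H^\times$, while any product involving $1_H$ stays in $K$), and that $K$ is in fact \emph{breakable}: for $x, y \in K$, if one of them is $1_H$ then $xy \in \{x,y\}$ trivially, and otherwise $x, y \in H \setminus H^\times$ and breakability of $H \setminus H^\times$ gives $xy \in \{x,y\}$. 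By Proposition~\ref{prop: breakable}, $\mathcal{P}_{\fin,1}(K)$ is then UmF, so all four clauses of Theorem~\ref{thm:UmFness}\ref{thm:UmFness(ii)} hold and $\mathcal{P}_{\fin,1}(H)$ is UmF.

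I do not anticipate a genuine obstacle here: the statement is a clean corollary of Theorem~\ref{thm:UmFness}, and commutativity is precisely what allows the two separate structural conditions (almost-breakability of $H \setminus H^\times$ and UmF-ness of $\mathcal{P}_{\fin,1}(K)$) to be merged into the single transparent condition that $H \setminus H^\times$ be breakable. The only spots warranting a line of care are the verification that $K$ is a breakable submonoid and the degenerate case $H = H^\times$ (i.e.\ $H \setminus H^\times = \emptyset$), where $K = \{1_H\}$, the trivial ideal extension reduces to $H = H^\times$, and the statement is consistent with Corollary~\ref{cor: H cancellative}.
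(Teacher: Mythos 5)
Your proposal is correct and follows essentially the same route as the paper, which likewise observes that a commutative monoid is almost-breakable if and only if it is breakable and then deduces the theorem directly from Theorem~\ref{thm:UmFness} and Proposition~\ref{prop: breakable}. Your write-up merely spells out the details (e.g.\ that $K$ is itself a breakable monoid) that the paper leaves implicit.
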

\begin{proof}
    A commutative monoid $H$ is almost-breakable if and only it is breakable. The claim is therefore an immediate consequence of Theorem \ref{thm:UmFness} and Proposition \ref{prop: breakable}.
\end{proof}

\section*{Acknowledgments}
L.~Cossu is a member of the National  
Group for Algebraic and Geometric Structures and their Applications (GNSAGA) of the Italian  
Mathematics Research Institute (INdAM). She acknowledges support from the European Union's Horizon 2020 program through the  
Marie Sk\l{}odowska-Curie grant agreement no.~101021791, from the Austrian Science Fund  
(FWF) through project PAT-9756623, and from the project ``FIATLUCS'' funded by the PNRR  
RAISE Liguria, Spoke 01 (CUP: F23C24000240006). 

The Marie Sk\l{}odowska-Curie grant also supported  
S.~Tringali's visit to the University of Graz in the summer of 2023, when this paper was  
initiated. In addition, S.~Tringali acknowledges support from grant A2023205045, funded by  
the Natural Science Foundation of Hebei Province.  

The authors are both grateful to Pace Nielsen (Brigham Young University, US) for suggesting the term ``trivial ideal extension'' (see the comments to \url{https://mathoverflow.net/questions/448918/}), to Benjamin Steinberg (City University of New York, US) for valuable insights on almost-breakable semi\-groups (see \url{https://mathoverflow.net/questions/450225/}), and to the anonymous referee of an earlier version of this paper for their careful reading and helpful suggestions.

\end{document}